\theoremstyle{plain}
\newtheorem{theorem}{Theorem}[section]
\newtheorem{thm}[theorem]{Theorem}
\newtheorem{corollary}[theorem]{Corollary}
\newtheorem{lem}[theorem]{Lemma}
\theoremstyle{definition}
\newtheorem{ex}[theorem]{Example}
\theoremstyle{remark}
\newtheorem{rem}[theorem]{Remark}
\newcommand{\C}{\mathbb{C}}
\newcommand{\T}{\mathbb{T}}
\newcommand{\D}{\mathbb{D}}
\newcommand{\N}{\mathbb{N}}
\DeclareMathOperator*{\dist}{dist}
\newcommand{\s}{\text{sgn}(\sigma)}
\newcommand{\per}{\mathcal{S}}
\subjclass[2010]{Primary 30B30; Secondary 30E10, 30H10.}
\begin{document}
\title[Boundary behavior of optimal approximants]{Boundary behavior of optimal polynomial approximants}
\author[B\'en\'eteau]{Catherine B\'en\'eteau}
\address{Department of Mathematics, University of South Florida, 4202 E. Fowler Avenue,
Tampa, Florida 33620-5700, USA.} \email{cbenetea@usf.edu}
\author[Manolaki]{Myrto Manolaki}
\address{School of Mathematics and Statistics, University College Dublin, Belfield, Dublin 4, Ireland.} \email{arhimidis8@yahoo.gr}
\author[Seco]{Daniel Seco}
\address{Universidad Carlos III de Madrid and Instituto de Ciencias Matem\'aticas, Departamento de Matem\'aticas, Avenida de la Universidad 30, 28911 Legan\'es (Madrid), Spain.} \email{dseco@math.uc3m.es}
\date{\today}

\begin{abstract}
In this paper, we provide an efficient method for computing the Taylor coefficients of $1-p_n f$, where $p_n$ denotes
the optimal polynomial approximant of degree $n$ to $1/f$ in a Hilbert space
$H^2_\omega$ of analytic functions over the unit disc $\D$, and $f$
is a polynomial of degree $d$ with $d$ simple zeros. 
As a consequence, we show that in many of the spaces $H^2_\omega$,
the sequence $\{1-p_nf\}_{n\in \N}$ is
uniformly bounded on the closed unit disc and, if $f$ has no zeros inside $\D$, the sequence
 $\{1-p_nf \}$ converges uniformly to 0 on compact subsets of the complement of the zeros of $f$ in $\overline{\D}, $ and we obtain precise estimates on the rate of convergence on compacta.
We also treat the previously unknown case of a single zero with higher multiplicity.
\end{abstract}

\maketitle

\section{Introduction}\label{Intro}

Consider a sequence of weights $\omega=\{\omega_k\}_{k \in \N}$ such that $\omega_0=1$, $\omega_k >0$
and $\omega_k/\omega_{k+1} \rightarrow 1$ as $k \rightarrow \infty$,
and define the weighted Hardy space $H^2_\omega$ as the space of
analytic functions  $f(z)=\sum_{k=0}^{\infty} a_k z^k$ in the unit disc $\D$ with norm defined
by \begin{equation}\label{defH2W} \|f\|^2_\omega= \sum_{k=0}^{\infty} |a_k|^2
\omega_k.\end{equation} For instance, the choices $\omega_k=
1/(k+1), 1, (k+1)$, give, respectively, the Bergman space $A^2$, the
Hardy space $H^2$ and the Dirichlet space $D$. They are reproducing kernel
Hilbert spaces with the corresponding induced inner product  
$$\left<f,g\right>_\omega = \sum_{k=0}^{\infty} a_k \overline{b_k} \, \omega_k,$$ 
where $g(z) = \sum_{k=0}^{\infty}b_kz^k$.
The reproducing kernel $k(z, w)$ at a point $w \in \D$ is given by 
\begin{equation}\label{reproker}
k(z , w) = \sum_{k=0}^{\infty} \frac{\overline{w}^k z^k}{\omega_k}.
\end{equation} 
For more information on
these spaces see \cite{FMS}, and for more on the particular cases of Hardy, Bergman, and 
Dirichlet spaces, see \cite{Dur,
Gar, DuS, HKZ, EFKMR}.  

In this article, we will examine the boundary behavior of certain 
polynomials that indirectly approximate reciprocals of functions in these analytic function spaces.
More specifically, for a function $f \in H^2_\omega$ (not identically $0$), we say that $p_n$ is the \emph{optimal polynomial approximant} to $1/f$ of
degree $n \in \N$  if $p_n$ is the polynomial
of degree less or equal to $n$ minimizing $\|1-p_nf\|_\omega$. Note that since $1/f$ is not necessarily in the space and since the norm in question is not multiplicative, $p_n$ is not the best approximant of 
 $1/f$ in the classical sense. Existence and uniqueness of $p_n$ follows from the fact 
that $p_n f$ is the orthogonal projection of $1$ onto the finite dimensional space $\mathcal{P}_n \cdot f,$ where $\mathcal{P}_n$ is the set of all 
polynomials of degree at most $n.$ 

This definition arises naturally from classical problems in function
theory.  In particular, a function $f$ is called \emph{cyclic} for a given space if the closed linear span of its polynomial multiples is equal to the space. It is then easy to show that cyclic functions in $H^2_\omega$ are  precisely  those functions for which 
$\|1-p_nf \|_{\omega} \rightarrow 0$ as $n \rightarrow \infty.$ Characterizing cyclic functions is known to be, in general, a very difficult problem.  In $H^2$, the cyclic functions are the outer functions, but in the classical Dirichlet space for example, a complete characterization of the cyclic functions is still an open problem, often referred to as the Brown and Shields Conjecture (see \cite{BS}). In \cite[Theorem 6.1]{BKLSS}, the authors found a characterization of cyclic functions for $H^2$ based on the zeros of the optimal polynomial approximants.  The hope is that additional insight into optimal polynomial approximants in other spaces will give tools to tackle open problems related to cyclicity. 

The term optimal polynomial approximant
was introduced in \cite{BCLSS} but these polynomials arose earlier in the context of digital signal processing (see \cite{Ch} and references therein).  These polynomials are also closely connected to reproducing kernels and orthogonal polynomials in
weighted Hardy spaces:  in fact, it is not hard to see that $ (p_n f)(z)$ is the reproducing kernel evaluated at $0$ of the weighted space  $\mathcal{P}_n \cdot f$ (see \cite{BKLSS}).
In \cite{BCLSS}, the authors studied the rate of decay of the norm of $1-p_n f$ for fairly general $f$.
However, only a few explicit examples have been computed, including those for 
 $1-z$ in any space $H^2_\omega$
in \cite{FMS} and for $(1-z)^s$ for $Re(s) > 0$ in $H^2$ in
\cite{blSimanek}.   In this paper, we are interested in exploring \emph{explicit} computations for simpler functions $f$.

The first goal of the present article is to
provide an efficient method via a closed formula to compute the coefficients of $1-p_n f$, and hence those of
$p_n$, whenever $f$ is a polynomial. Since for a function $f$ such that $f(0)=0$, the optimal
approximants are always identically $0$, we assume throughout that $f(0) \neq 0$.  Our method is first described for a function $f$
which is a polynomial with simple zeros. In that case, a special role will be played by the 
Szeg\H{o} kernel, which will appear in Gram matrices throughout the proofs of our results. (Gram matrices, also called Gramians, are matrices whose entries are given as inner products of vectors for some fixed set of vectors.)  When we move on to dealing with functions with a single zero of higher multiplicity, we will see that this universal structure will be replaced by Hilbert matrices.  This phenomenon is interesting in its own right and we will concentrate on this case at the end of our article. It seems likely that the same principles
should also solve the general case of all zeros, but the
formulas become less manageable.

From now on, for $n \in \N$ and $f$ a
polynomial of degree $d$, we use the following notation:
\begin{itemize}
\item[(N1)] $\hat{g}(k)$, the Taylor
coefficient of order $k \in \N$ of an analytic function
$g$ at $0$.
\item[(N2)] $d_{k,n}=\widehat{(1-p_n f)}(k)$, $k\in \N$.
\item[(N3)] $Z=Z(f)$ is the zero set of $f$.
\item[(N4)] $v^t$ is the transpose of a vector $v$.
\end{itemize}

Our first result considers the slightly more general situation of a polynomial $g$ of degree at most $d$ and its projection $P_n(g)$ onto the space $\mathcal{P}_n \cdot f.$  Theorem \ref{main} provides an efficient way of computing the coefficients of $g - P_n(g)$ and the distance from $g$ to $\mathcal{P}_n \cdot f.$  We will then apply the result to the case that $g \equiv 1$ to obtain information about the optimal polynomial approximants of $f$. 
For any integer 
$m \in \N$, we define $k_m(z,w) = \sum_{k=0}^m \frac{\overline{w}^k z^k}{\omega_k},$ which is the partial sum of the reproducing kernel $k(z,w).$ Notice that $k_m(z,w)$ is a reproducing kernel for the subspace of the weighted space $H^2_{\omega}$ generated by polynomials of degree at most $m$. 

\begin{thm}\label{main}
 Let $f$ be a monic polynomial of degree $d$ with simple zeros $z_1, \ldots, z_d$ that
lie in $ \C \backslash \{0\}$.  Let $g$ be a polynomial of degree at most $d$, and for $n \geq \deg(g),$  let $P_n(g)$ be the projection of $g$ onto $\mathcal{P}_n \cdot f$.  Let $g_{Z}:= (g(z_1), \ldots, g(z_d)) \in \C^d,$ and let
 $E=E_{Z,n}:= (e_{l,m})_{l,m=1}^d$ be the matrix whose coefficients are given by 
$e_{l,m}=k_{n+d}(z_l,z_m).$
Let 
$L(z) =L_n(z) :=\left(  k_{n+d}(z,z_1), \ldots, k_{n+d}(z,z_d) \right).$ Then $E$ is invertible and 
\begin{equation}\label{g-Pg}
\left( g - P_n(g) \right)(z) = L(z) \cdot E^{-1} \cdot g_{Z}^t.
\end{equation}
Moreover, 
\begin{equation}\label{dg-pg}
{\dist}^2 (g,\mathcal{P}_n \cdot f) = \overline{g_{Z}} \cdot E^{-1} \cdot g_{Z}^t,
\end{equation}
and if the zeros of $f$ are outside $\D,$ then there exists a positive constant $C(g,d,\omega)$ such that 
\begin{equation}\label{g-pgdecay}
{\dist}^2 (g,\mathcal{P}_n \cdot f) \leq C(g,d,\omega)\cdot\left(\displaystyle{\sum_{k=0}^n \frac{1}{\omega_k}}\right)^{-1}.
\end{equation}
 \end{thm}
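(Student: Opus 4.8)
The plan is to work inside the finite-dimensional space $\mathcal{P}_{n+d}$ of polynomials of degree at most $n+d$, for which, as noted in the excerpt, $k_{n+d}(\cdot,w)$ is the reproducing kernel. The organizing observation is purely algebraic: because $z_1,\dots,z_d$ are \emph{simple} nonzero zeros of $f$, a polynomial $h\in\mathcal{P}_{n+d}$ lies in $\mathcal{P}_n\cdot f$ if and only if $h(z_1)=\cdots=h(z_d)=0$. Indeed any $qf$ with $\deg q\le n$ vanishes at the $z_j$, and conversely a degree-$(n+d)$ polynomial vanishing at the $d$ simple zeros of $f$ is divisible by $f$ with quotient of degree at most $n$. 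Rewriting each evaluation through the kernel as $h(z_j)=\langle h,k_{n+d}(\cdot,z_j)\rangle_\omega$, this identifies $(\mathcal{P}_n\cdot f)^\perp\cap\mathcal{P}_{n+d}=\operatorname{span}\{k_{n+d}(\cdot,z_1),\dots,k_{n+d}(\cdot,z_d)\}$, a space of dimension $d$.

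To prove $E$ is invertible I would check directly that it is Hermitian (since $e_{l,m}=k_{n+d}(z_l,z_m)=\overline{e_{m,l}}$) and positive definite: expanding gives $\overline{c}\,E\,c=\sum_{k=0}^{n+d}\omega_k^{-1}\big|\sum_{m=1}^d c_m\overline{z_m}^{\,k}\big|^2\ge 0$, and a Vandermonde argument on the distinct points $z_1,\dots,z_d$ (using $n+d\ge d$) shows this vanishes only for $c=0$. For \eqref{g-Pg}, since $g$ has degree at most $d\le n+d$ we have $g\in\mathcal{P}_{n+d}$, so $g-P_n(g)\in(\mathcal{P}_n\cdot f)^\perp\cap\mathcal{P}_{n+d}$ and hence $g-P_n(g)=\sum_j c_j k_{n+d}(\cdot,z_j)=L(z)\cdot c$ for some column vector $c$. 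Evaluating at each $z_l$ and using $P_n(g)(z_l)=0$ turns the interpolation conditions into the system $E\,c=g_{Z}^{t}$, giving $c=E^{-1}g_{Z}^{t}$. Formula \eqref{dg-pg} then follows by computing $\|g-P_n(g)\|_\omega^2=\langle g-P_n(g),g\rangle_\omega$ (the projection part drops out by orthogonality) and reproducing each term, which yields $\sum_j c_j\overline{g(z_j)}=\overline{g_{Z}}\,E^{-1}g_{Z}^{t}$.

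For the decay estimate \eqref{g-pgdecay} I would not use the exact projection at all, but rather the elementary bound $\dist(g,\mathcal{P}_n\cdot f)\le\|h\|_\omega$ valid for \emph{any} polynomial $h$ of degree at most $n+d$ interpolating $g$ at $z_1,\dots,z_d$; this is legitimate because then $g-h$ vanishes at the zeros and so lies in $\mathcal{P}_n\cdot f$. The whole point becomes constructing a good interpolant. Writing $\ell_1,\dots,\ell_d$ for the degree-$(d-1)$ Lagrange basis polynomials of the nodes and setting $u_j(z)=k_n(z,z_j)/k_n(z_j,z_j)$ (so $u_j(z_j)=1$ and $\deg u_j\le n$), the polynomial $h=\sum_{j=1}^d g(z_j)\,u_j\,\ell_j$ has degree at most $n+d-1$ and satisfies $h(z_m)=g(z_m)$ for all $m$. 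The key estimate is that when $|z_j|\ge 1$ one has $\|u_j\|_\omega^2=1/k_n(z_j,z_j)=\big(\sum_{k=0}^n|z_j|^{2k}/\omega_k\big)^{-1}\le\big(\sum_{k=0}^n 1/\omega_k\big)^{-1}$, which is precisely the claimed rate; moreover multiplication by the fixed polynomial $\ell_j$ is bounded on $H^2_\omega$ (the hypothesis $\omega_k/\omega_{k+1}\to1$ keeps the forward shift, and hence every polynomial multiplier, bounded), so $\|u_j\ell_j\|_\omega\le C_{\ell_j}\|u_j\|_\omega$. Summing over $j$ gives $\|h\|_\omega\le C(g,d,\omega)^{1/2}\big(\sum_{k=0}^n 1/\omega_k\big)^{-1/2}$ and therefore \eqref{g-pgdecay}.

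The main obstacle, and the only genuinely delicate step, is \eqref{g-pgdecay}. The temptation is to take $h$ to be a truncation of the Taylor series of $g/f$, but this fails exactly in the critical case where $f$ has zeros on the unit circle: there $g/f\notin H^2_\omega$ and the tail of its series diverges, so no rate is obtained. The resolution is to measure the distance through a min-norm interpolant and to read the rate off the normalized partial reproducing kernel, whose squared norm $1/k_n(z_j,z_j)$ degrades to exactly $\big(\sum_{k=0}^n 1/\omega_k\big)^{-1}$ as the node reaches the circle; correctly isolating this mechanism, and checking that the fixed Lagrange factors contribute only bounded multiplier constants, is where the real content lies. The invertibility of $E$ and formulas \eqref{g-Pg}--\eqref{dg-pg}, by contrast, are routine consequences of the reproducing-kernel identification of $(\mathcal{P}_n\cdot f)^\perp\cap\mathcal{P}_{n+d}$.
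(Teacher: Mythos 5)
Your proposal is correct, and for the identities \eqref{g-Pg} and \eqref{dg-pg} it follows essentially the same route as the paper: both identify $(\mathcal{P}_n\cdot f)^\perp\cap\mathcal{P}_{n+d}$ with the span of the kernels $k_{n+d}(\cdot,z_i)$, solve the resulting linear system $E\,c=g_Z^t$, and compute the distance by reproducing (you prove invertibility of $E$ by hand via positive definiteness and a Vandermonde argument, where the paper cites a lemma from the Dirichlet-space literature, but that is a cosmetic difference). Where you genuinely diverge is the decay estimate \eqref{g-pgdecay}. The paper observes that for $p\in\mathcal{P}_{n-d}$ one has $g(1-pf)=g-(gp)f$ with $gp\in\mathcal{P}_n$, uses that the polynomial $g$ is a multiplier to get ${\dist}^2(g,\mathcal{P}_n\cdot f)\leq C_1(g,d,\omega)\,{\dist}^2(1,\mathcal{P}_{n-d}\cdot f)$, and then quotes the known rate for ${\dist}^2(1,\mathcal{P}_{n-d}\cdot f)$ from \cite{FMS}. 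You instead build an explicit competitor: the interpolant $h=\sum_j g(z_j)\,u_j\,\ell_j$ with $u_j=k_n(\cdot,z_j)/k_n(z_j,z_j)$, whose norm is controlled by $\|u_j\|_\omega^2=1/k_n(z_j,z_j)\leq\bigl(\sum_{k=0}^n 1/\omega_k\bigr)^{-1}$ when $|z_j|\geq 1$, with the fixed Lagrange factors absorbed into multiplier constants. Your version is self-contained (it does not lean on the external cyclicity-rate result) and makes transparent exactly where the rate comes from, namely the norm of the normalized partial reproducing kernel at a point on or outside the circle; the paper's version is shorter and reuses existing machinery. Both are valid, and both ultimately rest on the same two facts: polynomials are multipliers of $H^2_\omega$ (because $\omega_{k+1}/\omega_k$ is bounded), and the extremal problem for a single exterior node is governed by $k_n(z_j,z_j)$.
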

By applying Theorem \ref{main} to $ g \equiv 1,$ we obtain the following corollary.

\begin{corollary}\label{main1}  Let $f$ be a monic polynomial of degree $d$ with simple zeros $z_1, \ldots, z_d$ that
lie in $ \C \backslash \{0\}$, $p_n$ the $n$-th optimal approximant to $1/f$ in $H^2_\omega$, 
$d_{k,n} = \widehat{(1-p_n f)}(k),$ $v_0:= (1,...,1) \in \C^d,$ and $E:=E_{Z,n}$ the matrix as in Theorem \ref{main}.  
Then there exists a unique vector
$A_n=(A_{1,n},...,A_{d,n})$ such that for
$k=0,...,n+d$, we have
\begin{equation}\label{main11}
d_{k,n} = \frac{1}{\omega_k} \sum_{i=1}^d A_{i,n}
\overline{z_i}^{k}.
\end{equation}
Moreover $A_n^t = E^{-1} \cdot v_0 $ and 
\[ {\dist}^2 (1,\mathcal{P}_n \cdot f) =  \sum_{i=1}^d A_{i,n} = v_0 E^{-1} v_0^t.\]
In particular, 
$\sum_{i=1}^d A_{i,n} \in [0,1],$ and if $f$ is cyclic, then
$ \sum_{i=1}^d A_{i,n} \rightarrow 0 \mbox{ as } n \rightarrow \infty.$ 
Also, if $Z:=Z(f)
\subset \D$, then
\[{\dist}^2 (1,[f]) = v_0 K_Z^{-1} v_0^t,\]
where $[f]$ denotes the $z-$invariant subspace generated in
$H^2_\omega$ by $f$ and $K_{Z}$ is the matrix with entries $k_{Z,l,m}=
\left<k (\cdot ,z_{m} ), k(\cdot , z_{l} )\right>_\omega$ for $l, m =1,...,d$.
\end{corollary}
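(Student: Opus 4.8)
The plan is to specialize Theorem \ref{main} to $g \equiv 1$ for the first several assertions, and then pass to the limit $n \to \infty$ for the concluding identity. Since $p_n$ is the optimal approximant, $p_n f = P_n(1)$ is the orthogonal projection of $1$ onto $\mathcal{P}_n \cdot f$, so $1 - p_n f = 1 - P_n(1)$ and here $g_Z = (1,\ldots,1) = v_0$. Because $E$ is invertible (Theorem \ref{main}), I set $A_n^t := E^{-1} v_0^t$; substituting into \eqref{g-Pg} then gives
\[
(1 - p_n f)(z) = L(z) \cdot E^{-1} \cdot v_0^t = \sum_{i=1}^d A_{i,n}\, k_{n+d}(z, z_i).
\]
Expanding each $k_{n+d}(z, z_i) = \sum_{k=0}^{n+d} \overline{z_i}^k z^k / \omega_k$ and reading off the coefficient of $z^k$ for $0 \le k \le n+d$ yields \eqref{main11}. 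For uniqueness of $A_n$, I would use that the $z_i$ are distinct: taking $k = 0, \ldots, d-1$ in \eqref{main11} produces a Vandermonde system in $\overline{z_1}, \ldots, \overline{z_d}$, which recovers $A_n$ uniquely from the coefficients $d_{k,n}$.

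For the distance formula I would apply \eqref{dg-pg} with $g \equiv 1$: since $v_0$ is real, $\dist^2(1, \mathcal{P}_n \cdot f) = \overline{v_0}\, E^{-1} v_0^t = v_0 E^{-1} v_0^t$, and because $v_0 E^{-1} v_0^t = v_0 A_n^t = \sum_{i=1}^d A_{i,n}$, the two expressions agree. The bound $\sum_i A_{i,n} \in [0,1]$ then follows from elementary Hilbert-space facts: the quantity is a squared distance, hence nonnegative, and is bounded above by $\|1 - 0\|_\omega^2 = \omega_0 = 1$ because the zero function lies in $\mathcal{P}_n \cdot f$. If $f$ is cyclic, then by the characterization recalled in the introduction $\|1 - p_n f\|_\omega \to 0$, so $\dist^2(1, \mathcal{P}_n \cdot f) \to 0$ and hence $\sum_i A_{i,n} \to 0$.

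For the final identity when $Z \subset \D$, I would let $n \to \infty$. The subspaces $\mathcal{P}_n \cdot f$ increase and $\bigcup_n \mathcal{P}_n \cdot f$ is dense in $[f]$, so by the monotone convergence of the distance to an increasing chain of closed subspaces, $\dist^2(1, \mathcal{P}_n \cdot f) \downarrow \dist^2(1, [f])$. On the other hand, since $\omega_k/\omega_{k+1} \to 1$ forces $\omega_k^{1/k} \to 1$, the series $\sum_k \overline{z_m}^k z_l^k / \omega_k$ has radius of convergence $1$ in the variable $z_l \overline{z_m}$ and converges for all $z_l, z_m \in \D$; thus the entries $e_{l,m} = k_{n+d}(z_l, z_m)$ converge to $k(z_l, z_m)$. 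By the reproducing property, $k(z_l, z_m) = \langle k(\cdot, z_m), k(\cdot, z_l)\rangle_\omega$, which is precisely the $(l,m)$ entry of $K_Z$, so $E_{Z,n} \to K_Z$ entrywise. Since $K_Z$ is the Gram matrix of the linearly independent reproducing kernels $\{k(\cdot, z_i)\}_{i=1}^d$, it is invertible, and continuity of matrix inversion gives $v_0 E_{Z,n}^{-1} v_0^t \to v_0 K_Z^{-1} v_0^t$. Equating the two limits yields $\dist^2(1, [f]) = v_0 K_Z^{-1} v_0^t$.

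I expect the main obstacle to be this last paragraph: one must justify that the distances converge to $\dist^2(1,[f])$ (density of $\bigcup_n \mathcal{P}_n \cdot f$ in $[f]$ together with monotonicity of the distance), verify the kernel convergence via the growth condition on $\omega$, and confirm that $K_Z$ is genuinely invertible so that $E_{Z,n}^{-1} \to K_Z^{-1}$. The earlier parts are essentially direct substitutions into Theorem \ref{main} combined with a Vandermonde argument and a one-line extremal bound.
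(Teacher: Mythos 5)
Your proposal is correct and follows essentially the same route as the paper: specialize Theorem \ref{main} to $g \equiv 1$, read off the coefficients from the kernel expansion, and pass to the limit $n \to \infty$ for the invariant-subspace identity. You simply make explicit two points the paper leaves implicit, namely the uniqueness of $A_n$ via a Vandermonde system and the detailed justification that $E_{Z,n} \to K_Z$ entrywise with $\dist^2(1,\mathcal{P}_n \cdot f) \downarrow \dist^2(1,[f])$.
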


\begin{rem}
Note that the restriction that $f$ is monic in Theorem \ref{main} and in Corollary \ref{main1} is insignificant, since if $f$ is not monic, we can use the above theorem to find the optimal polynomial 
approximants to $\hat{f}(d)/ f,$ and then divide them by $\hat{f}(d)$ to get the optimal polynomial approximants to $1/f$.
\end{rem}
\begin{rem}
The key point of Corollary \ref{main1} is that the matrix $E:=E_{Z,n}$ is of fixed size $d \times  d$ and depends only on the zeros $Z$ 
of $f$ and on $n$.  In addition, the only unknowns ($A_{i,n}, i = 1, \ldots, d$) needed in order to find a
closed formula for the Taylor coefficients $d_{k,n}$ of $1-p_nf$ (and hence, the Taylor coefficients of $p_n$) for all $k=0,...,n+d$ are independent of $k.$ 
\end{rem}
\begin{rem}
Note that since the entries of the matrix $E_{Z,n}$ are given by the reproducing kernels of the subspace of $\mathcal{P}_{n+d}$  evaluated at the
zeros of $f$, $E_{Z,n}$ encodes a finite-dimensional version
of the corresponding Gram matrix $K_{Z}$ of reproducing kernels of the whole space $H^2_{\omega}$. 
\end{rem}

\begin{rem}
The distance formulas in Corollary \ref{main1} are similar to those discussed in a slightly different context in Theorem 3.5 in \cite{Remarks}. 
\end{rem}

Theorem \ref{main} is the key to understanding the behavior of the optimal approximants to reciprocals of 
polynomials on the unit circle $\T$, where we concentrate on the case in which $Z(f) \cap \D =
\emptyset$.  One may ask, for instance, \emph{does the norm convergence of $1-p_nf$ to $0$ (for cyclic $f$)
carry over to pointwise convergence on the unit circle?} For example, whenever $f$ has no zeros in the closed disc, it is cyclic in
$H^2_\omega$, and $1-p_nf$ converge towards 0 exponentially
fast (which also implies uniform convergence on the boundary).  Could the same pointwise convergence hold for any function $f$ that is a 
polynomial, even if it has zeros on the unit circle? Of course, one would have to exclude convergence at the zeros $z_i$ of $f$, since there, 
$(1-p_n f )(z_i) = 1$ for all $n.$  A similar type of question (motivated in part by Proposition 3.2 in \cite{BCLSS}) involves the Wiener norm of a function $h$,
defined by $\|h\|_{A(\T)}=\sum_{k \in \N} |\hat{h}_k|$.  One may ask, then, 
\emph{is the Wiener norm of $p_n f$ uniformly bounded in $n$?}  Note that since the Wiener norm is always larger than the $H^\infty$
norm, boundedness of the Wiener norm would imply that the sequence
$1-p_nf$ is uniformly bounded on $\overline{\D}.$

On the other hand, one may wonder whether a completely different phenomenon can occur, namely, \emph{are there functions $f$ for which
$\overline{\{(1-p_nf)(z_0): n \in \N \}} = \C$, for some $z_0 \in \T$?}  Such functions are called \emph{universal} at the point $z_0.$  
In this paper, we examine the first two questions, and answer them both in the affirmative, for polynomials $f$ with distinct zeros, first in the case where the 
zeros are outside the open unit disc and approximants are considered in the Hardy or Bergman spaces, and second in the case when all zeros are on the circle and the weight $\omega_k$ is monotonic.  The third question relating to universality is addressed in  
\cite{BMS2}.

Thus, we prove the following. 

\begin{thm}\label{Wiener}
Let $f$ be a polynomial with simple zeros such that $Z(f) \cap \D = \emptyset,$ and let $p_n$
be the $n$-th optimal approximant to $1/f$ in the Hardy space $H^2$ or the Bergman space $A^2$. Then there exists a constant $C > 0$ such
that for all $n \in \N$,
\[\|1-p_nf\|_{A(\T)} \leq C.\]
\end{thm}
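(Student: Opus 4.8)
The plan is to start from the closed formula of Corollary \ref{main1} and reduce the boundedness of the Wiener norm to a uniform lower bound on the spectrum of a normalized Gram matrix. Write $N:=n+d$. By Corollary \ref{main1} we have $d_{k,n}=\frac{1}{\omega_k}\sum_{i=1}^d A_{i,n}\overline{z_i}^{\,k}$ for $0\le k\le N$, while $d_{k,n}=0$ for $k>N$ because $1-p_nf\in\mathcal{P}_N$, and $A_n^t=E^{-1}v_0^t$. Hence, by the triangle inequality,
\[
\|1-p_nf\|_{A(\T)}=\sum_{k=0}^{N}|d_{k,n}|\le \sum_{i=1}^d|A_{i,n}|\,S_{i,n},\qquad S_{i,n}:=\sum_{k=0}^{N}\frac{|z_i|^{k}}{\omega_k},
\]
and it suffices to bound $\sum_i|A_{i,n}|S_{i,n}$ uniformly in $n$ for $\omega_k=1$ ($H^2$) and $\omega_k=1/(k+1)$ ($A^2$). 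I would then normalize: set $D=D_n:=\operatorname{diag}(\sqrt{e_{1,1}},\dots,\sqrt{e_{d,d}})$ with $e_{i,i}=k_N(z_i,z_i)=\sum_{k=0}^{N}|z_i|^{2k}/\omega_k$, and $\tilde E:=D^{-1}ED^{-1}$. Then $\tilde E$ is a correlation matrix (Hermitian, positive semidefinite, unit diagonal, and $|\tilde e_{l,m}|\le1$ by Cauchy--Schwarz). With $w:=D^{-1}v_0^t$ (so $w_i=1/\sqrt{e_{i,i}}$), the identity $A_n^t=E^{-1}v_0^t$ gives $A_{i,n}=(\tilde E^{-1}w)_i/\sqrt{e_{i,i}}$. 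Thus, if $\sup_n\|\tilde E^{-1}\|=:C_0<\infty$, then $|A_{i,n}|\le C_0\|w\|/\sqrt{e_{i,i}}$, and everything reduces to elementary asymptotics of $e_{i,i}$, $S_{i,n}$ and $\|w\|^2=\sum_i 1/e_{i,i}$.

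The main obstacle is precisely this uniform invertibility of $\tilde E$, and the difficulty is that its off-diagonal entries do not converge: they carry oscillating unimodular factors. To deal with this I would split the zeros into $I_1=\{i:|z_i|=1\}$ and $I_2=\{i:|z_i|>1\}$. A direct estimate of the partial kernels shows that whenever $l\ne m$ and at least one of $z_l,z_m$ lies on $\T$, the numerator $e_{l,m}$ grows strictly slower than $\sqrt{e_{l,l}e_{m,m}}$, so $\tilde e_{l,m}\to0$. For $l,m\in I_2$, writing $z_i=\rho_ie^{i\theta_i}$ with $\rho_i>1$, the same computation shows that in \emph{both} $H^2$ and $A^2$ the partial kernels are dominated by their top-degree terms with the same leading structure (the Bergman factor $N+1$ cancels in the quotient), yielding
\[
\tilde e_{l,m}=e^{\,i(N+1)(\theta_l-\theta_m)}c_{l,m}+o(1),\qquad c_{l,m}:=\frac{\sqrt{(|z_l|^2-1)(|z_m|^2-1)}}{z_l\overline{z_m}-1}.
\]
Factoring the unitary phase matrix $\Phi_N=\operatorname{diag}(e^{i(N+1)\theta_i})_{i\in I_2}$, the $I_2$-block of $\tilde E$ equals $\Phi_N C\Phi_N^*+o(1)$, where $C=(c_{l,m})$ has unit diagonal; expanding $\frac{1}{z_l\overline{z_m}-1}=\sum_{j\ge1}(1/z_l)^j\overline{(1/z_m)^j}$ identifies $C$ as the (positively rescaled) Gram matrix of the linearly independent sequences $((1/z_i)^j)_{j\ge1}$, i.e. of the Szeg\H{o} kernels at the distinct reflected points $1/\overline{z_i}\in\D$, so $C$ is positive definite. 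Since $\Phi_N$ is unitary, the $I_2$-block has smallest eigenvalue $\ge\lambda_{\min}(C)-o(1)$, the $I_1$-block tends to the identity, and the cross-blocks tend to $0$; by Weyl's inequality $\lambda_{\min}(\tilde E)\ge\min(1,\lambda_{\min}(C))-o(1)$, bounded below for large $n$. As $\tilde E$ is invertible for each individual $n$ by Theorem \ref{main}, this gives $C_0=\sup_n\|\tilde E^{-1}\|<\infty$.

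Finally I would assemble the estimate. From $|A_{i,n}|\le C_0\|w\|/\sqrt{e_{i,i}}$ we get $\|1-p_nf\|_{A(\T)}\le C_0\|w\|\sum_{i=1}^dS_{i,n}/\sqrt{e_{i,i}}$. A short computation of the geometric sums gives, for $i\in I_1$ and $i\in I_2$ respectively, $S_{i,n}/\sqrt{e_{i,i}}\asymp\sqrt{N}$ and $\asymp1$ in $H^2$, and $\asymp N$ and $\asymp\sqrt{N}$ in $A^2$, while $\|w\|^2=\sum_i1/e_{i,i}\asymp(\#I_1)/N$ in $H^2$ and $\asymp(\#I_1)/N^2$ in $A^2$ (exponentially small if $I_1=\emptyset$). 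Multiplying through, the $N$-dependence cancels in each space, so $\|1-p_nf\|_{A(\T)}$ is bounded uniformly in $n$; when $I_1=\emptyset$ the bound in fact tends to $0$, consistent with the exponential convergence expected when $1/f$ is analytic across $\overline{\D}$. The only genuinely hard point is the uniform invertibility of $\tilde E$, handled above by the phase-factoring that converts the oscillating entries into a fixed positive definite Szeg\H{o} Gram matrix.
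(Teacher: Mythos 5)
Your proposal is correct, and although it starts from the same place as the paper (Corollary \ref{main1} together with the triangle inequality, reducing the Wiener norm to decay estimates for the coefficients $A_{i,n}$), its technical core takes a genuinely different route. The paper computes $A_{i,n}=\det(E^{(i)})/\det(E)$ by Cramer's rule, which forces the explicit permutation expansions of Lemmas \ref{diagonal} and \ref{growthAin}: a lower bound $\det(E)\geq \delta\,(n+d+1)^{d_1}\prod_l|z_l|^{2(n+d+1)}$ matched against upper bounds for the modified determinants, with a separate Bergman version in Section \ref{Secnew5}. You instead normalize $E$ to the correlation matrix $\tilde E=D^{-1}ED^{-1}$ and establish a uniform spectral gap $\inf_n\lambda_{\min}(\tilde E)>0$, from which $|A_{i,n}|\leq\|\tilde E^{-1}\|\,\|w\|/\sqrt{e_{i,i}}$ reproduces exactly the rates of Lemma \ref{growthAin} (namely $O(1/(n+d+1))$ for zeros on $\T$ and $o(|z_i|^{-(n+d+1)})$ for exterior zeros in $H^2$, and the corresponding Bergman rates). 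The two arguments share the same essential input --- the positive definiteness of the Szeg\H{o}-type Gram matrix $\bigl(1/(z_l\overline{z_m}-1)\bigr)$, which is the paper's Lemma \ref{matrixB} and coincides with your matrix $C$ up to the positive diagonal congruence by $\sqrt{|z_i|^2-1}$ --- and they neutralize the non-convergent unimodular phases in the exterior block in parallel ways: the paper via the cancellation $\prod_l(z_l\overline{z_{\sigma(l)}})^{n+d+1}=\prod_l|z_l|^{2(n+d+1)}$ over permutations of the exterior indices, you by factoring out the unitary $\Phi_N$ before passing to the limit and invoking Weyl's inequality. What your route buys is the avoidance of the determinant combinatorics and a uniform treatment of $H^2$ and $A^2$ (and of the case $d_1=0$, which the paper sets aside), since only the leading asymptotics of the diagonal kernels and the cancellation of the factor $n+d+2$ in the normalized quotient are needed; what the paper's route buys is entrywise control of $E^{-1}$ in the spirit of Corollary \ref{estinv} and the individual coefficient estimates that are reused verbatim in the proof of Theorem \ref{pointwise} --- though your bound on each $|A_{i,n}|$ in fact recovers those as well, so your argument would serve for both theorems.
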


The behavior shown here is
opposite to universality: it is not possible for a polynomial $f$ to
be universal at a point of the boundary. It is natural to ask whether, for $z_0 \in \T$ the set of accumulation points of $\{(1-p_nf)(z_0): n \in \N \}$
is a singleton. The answer to this question is contained in the following theorem. 

\begin{thm}\label{pointwise}
Let $f$ be a polynomial with simple zeros such that $Z(f) \cap \D = \emptyset,$ and let $p_n$
be the $n$-th optimal approximant to $1/f$ in the Hardy space $H^2$ or the Bergman space $A^2$.
Then 
\[1-p_nf \rightarrow 0 \quad as \quad n \rightarrow  \infty, \]
uniformly on compact subsets of $\overline{\D} \backslash Z(f)$.
\end{thm}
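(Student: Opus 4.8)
The plan is to read off the behavior of $q_n:=1-p_nf$ from the closed formula of Theorem~\ref{main} applied with $g\equiv 1$. Writing $Z=\{z_1,\dots,z_d\}$, equation (\ref{g-Pg}) gives $q_n(z)=L(z)\,E^{-1}v_0^t=\sum_{l=1}^{d}A_{l,n}\,k_{n+d}(z,z_l)$ with $A_n^t=E^{-1}v_0^t$. The interior is immediate: since $Z\cap\D=\emptyset$, estimate (\ref{g-pgdecay}) gives $\|q_n\|_\omega^2=\dist^2(1,\mathcal P_n\cdot f)\to 0$, and the reproducing kernel inequality $|q_n(z)|\le\|q_n\|_\omega\sqrt{k(z,z)}$ forces uniform convergence on compact subsets of $\D$, where $k(z,z)$ is bounded. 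All the difficulty lies on $\T$, where $k(z,z)$ blows up and this crude bound is worthless; the task is to show $q_n\to 0$ uniformly on each compact $K\subset\T\setminus Z$.

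First I would normalize the Gram matrix. Put $\Delta=\mathrm{diag}(e_{1,1},\dots,e_{d,d})$ and $\tilde E=\Delta^{-1/2}E\Delta^{-1/2}$, the correlation matrix with unit diagonal, so that $q_n(z)=\tilde L(z)\,\tilde E^{-1}b$ where $\tilde L(z)_l=k_{n+d}(z,z_l)/\sqrt{e_{l,l}}$ and $b_m=1/\sqrt{e_{m,m}}$. Estimating entrywise, $|q_n(z)|\le \bigl(\max_{l,m}|(\tilde E^{-1})_{l,m}|\bigr)\sum_{l,m}\tfrac{|k_{n+d}(z,z_l)|}{\sqrt{e_{l,l}}}\cdot\tfrac{1}{\sqrt{e_{m,m}}}$, so, granting a uniform bound on $\tilde E^{-1}$ (the crux, below), it suffices to show that each of the finitely many products tends to $0$ uniformly on $K$. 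This is a matter of matching growth rates, separating the zeros lying on $\T$ from those strictly outside. For $z_l\in\T$ one uses $1-\overline{z_l}z=\overline{z_l}(z_l-z)$, so $|1-\overline{z_l}z|=|z_l-z|\ge\delta:=\dist(K,Z)$ and $|k_{n+d}(z,z_l)|$ stays bounded ($H^2$) or grows like $n$ ($A^2$) on $K$, whereas $e_{l,l}=\sum_{k\le n+d}\omega_k^{-1}$ grows like $n$ ($H^2$) or $n^2$ ($A^2$); for $z_l$ outside $\overline\D$ one has $e_{l,l}\asymp|z_l|^{2(n+d)}$, making $1/\sqrt{e_{m,m}}$ exponentially small. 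A short case check shows every product $\frac{|k_{n+d}(z,z_l)|}{\sqrt{e_{l,l}}}\cdot\frac{1}{\sqrt{e_{m,m}}}\to 0$ uniformly on $K$; in fact the same bounds hold for all $z\in\overline\D$ with $\dist(z,Z)\ge\delta$, recovering the interior at the same time.

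The main obstacle is the uniform bound $\sup_n\|\tilde E^{-1}\|<\infty$, i.e.\ $\liminf_n\lambda_{\min}(\tilde E)>0$. I would prove it by grouping the zeros by modulus and showing that, after conjugation by a suitable unitary phase matrix $\Phi_n$, the matrix $\Phi_n^*\tilde E\Phi_n$ converges to a block diagonal matrix $B$. Cross correlations between zeros of different modulus tend to $0$, as do those between distinct zeros both on $\T$ (so the circle block tends to the identity); the only surviving off-diagonal blocks come from distinct zeros of a common modulus $r>1$, for which $\tilde E_{l,m}$ converges, up to the phase removed by $\Phi_n$, to $(r^2-1)^2/(1-z_l\overline{z_m})^2$ in $A^2$ and to $(r^2-1)/(z_l\overline{z_m}-1)$ in $H^2$. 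Using $1-z_l\overline{z_m}=-z_l\overline{z_m}\bigl(1-(z_l\overline{z_m})^{-1}\bigr)$ one identifies each such block as $D\,G\,D^{*}$ with $D$ an invertible diagonal matrix and $G$ a genuine reproducing kernel Gram matrix at the interior points $1/z_l\in\D$; hence each block, and therefore $B$, is positive definite, giving $\lambda_{\min}(\tilde E)\to\lambda_{\min}(B)>0$. Tracking the error terms in these asymptotics would in addition produce the quantitative rate of convergence on compacta advertised in the abstract. I expect this conditioning estimate for the normalized Szeg\H{o}/Bergman Gram matrices to be the genuinely hard part; once it is in hand, the two displayed estimates finish the proof.
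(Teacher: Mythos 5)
Your overall strategy --- write $1-p_nf=\sum_l A_{l,n}k_{n+d}(\cdot,z_l)$, pass to the normalized correlation matrix $\tilde E=\Delta^{-1/2}E\Delta^{-1/2}$, prove $\liminf_n\lambda_{\min}(\tilde E)>0$, and finish with entrywise growth-rate matching --- is viable and runs parallel to the paper's proof, which instead lower-bounds $\det E$ by the product of its diagonal entries (Lemma \ref{diagonal}) and extracts the individual decay rates of the $A_{i,n}$ via Cramer's rule (Lemma \ref{growthAin}). Both arguments rest on the same positive-definiteness input, namely Lemma \ref{matrixB} for the matrix with entries $1/(z_l\overline{z_m}-1)$, identified as a Gram matrix of kernels at the reflected points $1/z_l\in\D$. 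Your ``short case check'' on the products $|k_{n+d}(z,z_l)|\,e_{l,l}^{-1/2}e_{m,m}^{-1/2}$ does go through in both $H^2$ and $A^2$, and your treatment of the interior via the norm bound matches the paper's.

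However, the step you rightly single out as the crux contains two incorrect asymptotic claims. First, correlations between two zeros both outside $\overline{\D}$ but of \emph{different} moduli do not tend to zero: with $w=z_l\overline{z_m}$ one has $|e_{l,m}|\sim |w|^{n+d+1}/|w-1|$ while $\sqrt{e_{l,l}e_{m,m}}\sim |w|^{n+d+1}/\sqrt{(|z_l|^2-1)(|z_m|^2-1)}$, so the exponential rates cancel exactly and $(\Phi_n^*\tilde E\Phi_n)_{l,m}\to \sqrt{(|z_l|^2-1)(|z_m|^2-1)}/(z_l\overline{z_m}-1)$, which is generically nonzero. The correct limiting structure therefore has only two blocks: the identity on the circle zeros, and, on \emph{all} exterior zeros taken together, the matrix $DBD$ with $D=\mathrm{diag}(\sqrt{|z_l|^2-1})$ and $B$ precisely the matrix of Lemma \ref{matrixB}; this is still positive definite, so your conclusion survives, but not for the reason you give. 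Second, your claimed $A^2$ limit $(r^2-1)^2/(1-z_l\overline{z_m})^2$ is wrong: for $|w|>1$ the partial sum $\sum_{k=0}^{N}(k+1)w^k$ is dominated by $(N+2)w^{N+1}/(w-1)$ (this is Lemma \ref{BergmanKernel}), not by the Bergman kernel $1/(1-w)^2$, so after normalization the limit is again the Szeg\H{o}-type expression $(r^2-1)/(z_l\overline{z_m}-1)$, exactly as in $H^2$. Both slips are repairable, and the repaired limit is positive definite for the same Gram-matrix reason you cite, but as written the convergence $\Phi_n^*\tilde E\Phi_n\to B$ is asserted for the wrong matrix $B$, which is a genuine gap in the step that carries the whole proof.
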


In the case of the Hardy space, Fatou's theorem guarantees that for any cyclic function $f$, a subsequence of the optimal approximants will satisfy $1-p_{n_k}f \rightarrow 0$ at almost every point of the boundary (where $f$ is defined in the non-tangential limit sense), but in Theorem \ref{pointwise} we are looking to predict the behavior at a given point. In this sense, $1-p_nf$ already satisfies some form of overconvergence but here we find out a special situation in that overconvergence happens (a) for the whole sequence of optimal approximants, (b) in the pointwise sense, (c) with control on the exceptional set, which happens to be exactly the finite set $Z(f)$ of the roots of $f$, and (d) convergence is uniform on compact subsets of $\overline{\D} \backslash Z(f)$. 

In Section \ref{Sec2}, we will provide the proof of Theorem \ref{main} Then, in Section \ref{Sec3}, we state some technical lemmas
establishing lower bounds of determinants of some key matrices, and assume those lemmas to 
prove the general result about the boundedness of 
the Wiener norm, Theorem \ref{Wiener},
and the pointwise convergence, Theorem
\ref{pointwise}. We restrict there to the case of the Hardy space. In Section \ref{lemmas}, we establish these technical lemmas. Then we discuss, in Section \ref{Secnew5}, how to extend these results to the setting of $A^2$, therefore completely settling Theorem \ref{Wiener} and \ref{pointwise}.  If all the zeros of the polynomial $f$ lie on the unit circle, a different set of estimates gives rise to convergence rates in uniform norm over compact subsets of $\overline{\D} \backslash Z(f)$ for spaces $H^2_{\omega}$ with monotonic weights. This is the result of Theorem \ref{Th2r2}.
We dedicate Section \ref{Sec7} to the study of the function $(z-1)^t$, for $t \in \N$, in a space $H^2_\omega$, and the computations there will involve inversion of Hilbert-type matrices.
We conclude with some further directions of research, considerations about sharpness
and open questions in Section \ref{Sec6}. 
We would like to thank the referees of an earlier version of this article whose comments led to significant improvements both in the presentation and  content of the current paper, and some of the proofs we present here have been simplified by making use of their ideas.

\section{Computation of coefficients of projections}\label{Sec2}

Let us now turn to the proof of Theorem \ref{main}.  

\begin{proof}
We fix a monic polynomial $f$ with simple zeros $z_1, \ldots, z_d$ that
lie in $ \C \backslash \{0\},$ and let $g$ be a polynomial of degree at most $d$.  
Notice that for any integer $n \geq \deg(g),$ the reproducing kernels $k_{n+d}(z,w)$ still have the reproducing property even when $|w|\geq 1$ in the space of polynomials of degree at most $n+d$.  Therefore, as is well-known, (see, e.g, \cite[Lemma 4.2.6 and Lemma 4.2.3]{EFKMR}) since the $z_i$, $i = 1, \ldots, d$ are distinct, the functions   $k_{n+d}(z,z_i)$ are linearly independent and the matrix $E$ is invertible. Moreover, the kernels $k_{n+d}(z, z_i)$ are orthogonal to $\mathcal{P}_n \cdot f,$ since for any polynomial $p$ of degree at most $n$, $\left< p_n f , k_{n+d}(\cdot ,z_i) \right> = (p f)(z_i) = 0.$  Therefore, these kernels form a basis for the subspace of $\mathcal{P}_{n+d}$ that is orthogonal to 
$\mathcal{P}_n \cdot f,$ and so 
\begin{equation}\label{form1}
(g - P_n(g))(z) = L(z) \cdot A_n^t,
\end{equation}
where $L(z) :=\left(  k_{n+d}(z,z_1), \ldots, k_{n+d}(z,z_d) \right)$ and $A_n \in \C^d.$  In particular, for each $i = 1, \ldots, d,$
$(g - P_n(g))(z_i) = L(z_i) \cdot A_n^t.$ Rewriting \eqref{form1} in matrix form gives that 
$g_{Z}^t = E \cdot A_n^t,$ and therefore 
$\left( g - P_n(g) \right)(z) = L(z) \cdot E^{-1} \cdot g_{Z}^t,$ which is \eqref{g-Pg} in Theorem \ref{main}.

Now, writing $A_n:=(A_{1,n}, \ldots, A_{d,n}),$ we see that
\begin{align*}
{\dist}^2 (g,\mathcal{P}_n \cdot f) & = \|g - P_n(g)\|_{H^2_{\omega}}^2 \\
& = \left< \sum_{i=1}^d A_{i,n} k_{n+d}(z,z_i),  \sum_{i=1}^d A_{i,n} k_{n+d}(z,z_i) \right>_{\omega} \\
& =  \sum_{i=1}^d A_{i,n} \left( \sum_{j=1}^d \overline{A_{j,n} \, k_{n+d}(z_i,z_j)} \right) \\
& = \sum_{i=1}^d A_{i,n}  \overline{g(z_i)} \\
& = \overline{g_{Z}} \cdot E^{-1} \cdot g_{Z}^t,
\end{align*}
which proves \eqref{dg-pg} in Theorem \ref{main}.

Finally, for $n \geq \deg{g},$ we have that
\begin{align*}
\|g - P_n(g)\|_{H^2_{\omega}}^2 &  = \inf \left\{ \| g - p f\|^2_{H^2_{\omega}} : p \in \mathcal{P}_n \right\} \\
& \leq  \inf \left\{ \| g( 1 - p f)\|^2_{H^2_{\omega}} : p \in \mathcal{P}_{n-d} \right\} \\
& \leq C_1(g,d,\omega) \,  {\dist}^2 (1,\mathcal{P}_{n-d} \cdot f),
\end{align*}
where $ C_1(g,d,\omega)$ is a constant, since $g$ is a polynomial and hence a multiplier for 
$H^2_{\omega}.$  Now if the zeros of $f$ are all outside $\D,$ then by \cite{FMS}, 
$$ {\dist}^2 (1,\mathcal{P}_{n-d} \cdot f) \leq C(d,\omega)\cdot \left( \sum_{k=0}^n \frac{1}{\omega_k} \right) ^{-1},$$
where $C(d,\omega)$ is a constant, and therefore 
\begin{equation}
{\dist}^2 (g,\mathcal{P}_n \cdot f) \leq C(g,d,\omega)\cdot \left( \sum_{k=0}^n \frac{1}{\omega_k} \right) ^{-1},
\end{equation}
where $C(g,d,\omega)$ is a constant, as desired, and the proof of the theorem is complete. 
\end{proof}

\begin{rem}
Notice that this proof is simply a reformulation of the fact that in this finite-dimensional setting, $P_n(g),$ being the difference between $g$ and the projection of $g$ onto the space generated by the linearly independent vectors $k_{n+d}(z,z_i)$, can be obtained by the following ratio of determinants (see, e.g, \cite[Lemma 4.2.4]{EFKMR}):
\begin{eqnarray*}
P_n(g)= \frac{1}{\det E} \begin{vmatrix}

g(z) & k_{n+d}(z,z_1) & \cdots  & k_{n+d}(z,z_d)\\

\overline{g(z_1)} & k_{n+d}(z_1,z_1) & \cdots & k_{n+d}(z_1,z_d)\\

\vdots & \vdots & \ddots  & \vdots \\

\overline{g(z_d)} & k_{n+d}(z_d,z_1) & \cdots & k_{n+d}(z_d,z_d)

\end{vmatrix}.
\end{eqnarray*}
\end{rem}
 Corollary \ref{main1} easily follows from Theorem \ref{main}:

\begin{proof}
Set $g \equiv 1.$ In that case, 
$P_n(g) = p_n f, $ where $p_n$ is the optimal polynomial approximant to $1/f$ of degree $n$, and thus, for 
$v_0 = (1, 1, \ldots, 1) \in \C^d$ and $A_n^t = E^{-1} \cdot v_0,$ where 
$A_n = (A_{1,n}, \ldots, A_{d,n})$,
\begin{align*}
(1-p_nf)(z) & = L(z) \cdot E^{-1} \cdot v^t \\
& = \sum_{i=1}^d A_{i,n} k_{n+d}(z,z_i) \\
& =   \sum_{i=1}^d A_{i,n} \left( \sum_{k=0}^{n+d} \frac{\overline{z_i}^k z^k}{\omega_k} \right)\\
& = \sum_{k=0}^{n+d} \left( \frac{1}{\omega_k}  \sum_{i=1}^d A_{i,n} \overline{z_i}^k \right) z^k,
\end{align*}
as desired, and the fact that 
\[ {\dist}^2 (1,\mathcal{P}_n \cdot f) =   \sum_{i=1}^d A_{i,n} = v_0 E^{-1} v_0^t\] follows immediately. 
Notice that this distance must be a number in $[0,1]$ since
$0 \in \mathcal{P}_n \cdot f$.  In addition, if $f$ is cyclic, then $\sum_{i=1}^d A_{i,n}= \|1-p_n f\|^2_{\omega} \rightarrow 0.$

The last part of the corollary comes from the fact that if 
$Z(f) \subset \D,$ then, as $n \rightarrow \infty,$ each of the kernels $k_n(z,z_i)$ converge to the reproducing kernel $k(z,z_i)$ for the whole space $H^2_{\omega}$ at the same point $z_i,$ which is now a point in the unit disc. Thus, this 
kernel is an element of the space,
so the Gramian is invertible, and the result follows. 

\end{proof}

Notice that Theorem \ref{main} also allows us to estimate the entries of $E^{-1}$, which will be important when examining the pointwise convergence of $1-p_nf$ and related Wiener norm estimates when $Z \subset \T.$ 

\begin{corollary}\label{estinv}
Let $f$ be as in Theorem \ref{main} and $E=E_{Z,n}$ be the corresponding Gram matrix. If $Z\subset\mathbb{T}$ and $E^{-1} = (u_{i,j})_{i,j=1}^d$, then there exists a constant $C(Z,\omega)>0$ such that
\begin{equation}\label{estinv1}
|u_{i,j}| \leq C(Z,\omega) \cdot \left( \sum_{k=0}^n \frac{1}{\omega_k} \right) ^{-1}.
\end{equation}
\end{corollary}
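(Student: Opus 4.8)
The plan is to read the entries of $E^{-1}$ off the adjugate formula $E^{-1}=(\det E)^{-1}\,\mathrm{adj}(E)$ and to exploit that, because $Z\subset\T$, the entries of $E$ are uniformly comparable to a single quantity. Writing $\zeta_{l,m}:=z_l\overline{z_m}$, which lies on $\T$ since $|z_l|=|z_m|=1$, we have $e_{l,m}=\sum_{k=0}^{n+d}\zeta_{l,m}^{\,k}/\omega_k$, so that $|e_{l,m}|\le\sigma_n$ while $e_{l,l}=\sigma_n$, where $\sigma_n:=\sum_{k=0}^{n+d}1/\omega_k$. Since $E$ is Hermitian positive definite by Theorem~\ref{main}, $\det E>0$, and the adjugate formula gives $u_{i,j}=(-1)^{i+j}M_{j,i}/\det E$, where $M_{j,i}$ is the minor obtained by deleting row $j$ and column $i$. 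The problem therefore reduces to bounding a single $(d-1)\times(d-1)$ minor from above and $\det E$ from below.

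The upper bound is immediate: the deleted matrix is a submatrix of $E$, all of whose entries have modulus at most $\sigma_n$, so Hadamard's inequality gives $|M_{j,i}|\le (d-1)^{(d-1)/2}\,\sigma_n^{\,d-1}$, a bound of order $\sigma_n^{\,d-1}$ with constant depending only on $d=\deg f$, hence only on $Z$.

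The crux is the matching lower bound $\det E\ge c(Z,\omega)\,\sigma_n^{\,d}$, which is exactly the type of determinant estimate I would draw from the technical lemmas of Section~\ref{lemmas}. The reason it should hold is that the normalized Gram matrix $\widetilde E:=\sigma_n^{-1}E$ has $1$'s on the diagonal and off-diagonal entries $\sigma_n^{-1}\sum_{k=0}^{n+d}\zeta_{l,m}^{\,k}/\omega_k$ with $\zeta_{l,m}\in\T\setminus\{1\}$. By Abel summation the numerators are controlled by $|1-\zeta_{l,m}|^{-1}$ times the sum of $1/\omega_{n+d}$ and the total variation of $\{1/\omega_k\}_{k\le n+d}$, both of which are $o(\sigma_n)$ under the standing hypothesis $\omega_k/\omega_{k+1}\to1$. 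Hence every off-diagonal entry of $\widetilde E$ tends to $0$, so $\widetilde E\to I$ and $\det\widetilde E\to1$; since $\det\widetilde E>0$ for every $n$, the sequence $\{\det\widetilde E\}_n$ is bounded below by a positive constant $c(Z,\omega)$, giving $\det E\ge c(Z,\omega)\,\sigma_n^{\,d}$.

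Combining the two bounds yields
\[
|u_{i,j}|=\frac{|M_{j,i}|}{\det E}\le\frac{(d-1)^{(d-1)/2}\,\sigma_n^{\,d-1}}{c(Z,\omega)\,\sigma_n^{\,d}}=\frac{C(Z,\omega)}{\sigma_n},
\]
and since $\sigma_n=\sum_{k=0}^{n+d}1/\omega_k\ge\sum_{k=0}^{n}1/\omega_k$ we conclude $|u_{i,j}|\le C(Z,\omega)\bigl(\sum_{k=0}^n1/\omega_k\bigr)^{-1}$, which is \eqref{estinv1}. The only genuinely delicate step is the uniform lower bound on $\det E$; everything else is elementary linear algebra together with the observation that boundary zeros produce unimodular ratios $\zeta_{l,m}\ne1$.
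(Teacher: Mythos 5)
Your proof is correct in the main regime but takes a genuinely different route from the paper, and one step fails in a degenerate regime. The paper never touches determinants: it notes that the diagonal entry $u_{i,i}$ equals $\overline{(g_i)_Z}\,E^{-1}(g_i)_Z^t={\dist}^2(g_i,\mathcal{P}_n\cdot f)$ for the Lagrange polynomial $g_i$ with $g_i(z_j)=\delta_{ij}$, bounds this by \eqref{g-pgdecay} (whose source is the estimate from \cite{FMS} for zeros outside $\D$), and then obtains the off-diagonal entries from the Cauchy--Schwarz inequality, using that the inverse of a Gram matrix is again a Gram matrix. Your adjugate-plus-Hadamard argument replaces that external input with an elementary Abel-summation bound on the off-diagonal entries of $E$ and a compactness-type lower bound on $\det E$; this is more self-contained and closer in spirit to the paper's Lemmas \ref{diagonal} and \ref{growthAin} (which carry out exactly this kind of determinant analysis for $H^2$ and $A^2$), whereas the paper's proof of this corollary is shorter and dimension-free in the sense that it never has to control $d!$ permutation terms.

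The concrete gap: your claim that $1/\omega_{n+d}$ and the total variation of $\{1/\omega_k\}_{k\le n+d}$ are $o(\sigma_n)$ under the sole hypothesis $\omega_k/\omega_{k+1}\to1$ is false when $\sum_k 1/\omega_k<\infty$. For instance, with $\omega_k=(k+1)^2$ the total variation tends to $1$ while $\sigma_n\to\pi^2/6$, so the off-diagonal entries of $\widetilde E$ do not tend to $0$ and $\widetilde E\not\to I$; your route to the lower bound $\det E\ge c\,\sigma_n^d$ breaks there. The statement itself survives in that regime for a trivial reason ($E\to K_Z$, which is invertible since the kernels at distinct boundary points remain linearly independent, so $E^{-1}$ stays bounded while the right-hand side of \eqref{estinv1} is bounded below), but your write-up needs this case distinction. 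In the divergent case $\sum_k 1/\omega_k=\infty$ your assertions do hold, though they deserve a line of justification: writing $a_k=1/\omega_k$, one has $|a_{k+1}-a_k|=a_k\epsilon_k$ with $\epsilon_k\to0$, so the total variation is $o(\sigma_n)$ by a Ces\`aro-type splitting, and since consecutive $a_k$ are eventually comparable, $a_{n+d}=o(\sigma_n)$ as well; with that, $\widetilde E\to I$ and the rest of your computation is correct, including the reduction from $\sum_{k=0}^{n+d}$ to $\sum_{k=0}^{n}$ at the end.
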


\begin{proof}
For each $i=1, \ldots, d,$  choose $g_i$ to be the interpolating polynomial of degree $d$ such that 
$g(z_i) = 1$ and $g(z_j) = 0$ for $j \neq i,$ and apply \eqref{dg-pg} and \eqref{g-pgdecay} to get that 
$$ |u_{i,i}| \leq C(g_i,d, \omega) \cdot \left(\sum_{k=0}^n \frac{1}{\omega_k}\right) ^{-1}.$$ Now, since $E$ is a Gram matrix, $E^{-1}$ is also a Gram matrix (see, e.g., \cite{HJ}), and therefore the entries of $E^{-1}$ can be viewed as inner products of some set of linearly independent vectors $w_i$, $i=1, \ldots, d.$ Applying the Cauchy-Schwarz inequality then gives that 
\begin{align*} |u_{i,j}| = | \left< w_i, w_j \right>| &\leq \sqrt{ \left< w_i, w_i \right>\cdot  \left< w_j, w_j \right>}\\
&\leq \sqrt{C(g_i,d, \omega)  C(g_j,d, \omega) }\left(\displaystyle{\sum_{k=0}^n \frac{1}{\omega_k}}\right) ^{-1}.
\end{align*}
Since there are only a finite number of $g_i,$ we can then choose an appropriate constant $C(Z, \omega)$ such that for all $i,j= 1, \ldots, d,$ we have
\begin{equation*}
|u_{i,j}| \leq C(Z,\omega) \cdot \left(\sum_{k=0}^n \frac{1}{\omega_k} \right) ^{-1},
\end{equation*}
as desired.
\end{proof}

We would like to stress that the main point of  Theorem \ref{main} and Corollary \ref{main1} is the constructive and explicit nature of the coefficients. The following example is included in order to show the efficiency of our method rather than the result. In fact, the result was previously obtained in \cite{BKLSS}, but the approach we present here is clearly faster.

\begin{ex}
Let us apply Corollary \ref{main1} to the function $f(z) = z - 1$ in the setting of the Hardy space $H^2.$ In Section \ref{Sec7}, we will apply a modification of these ideas to a new example. Then $d = 1, z_1 = 1,$ and for each $k \in \N,$ $\omega_k = 1.$  The matrix $E$ is simply a scalar, 
$$E = e_{1,1} = \sum_{k=0}^{n+1} \frac{1}{\omega_k} = n+2,$$ and $$A_n = A_{1,n} = \frac{1}{e_{1,1}} =  \frac{1}{n+2}.$$
Thus $d_{k,n} = \frac{1}{\omega_k} A_{1,n} \cdot 1 = \frac{1}{n+2},$ and therefore,
\begin{equation*}
(1-p_n f )(z) = \sum_{k=0}^{n+1} d_{k,n} z^k =  \sum_{k=0}^{n+1} \frac{1}{n+2} z^k =  \frac{1}{n+2} \frac{1-z^{n+2}}{1-z}, 
\end{equation*}
for $z \neq 1.$
Solving for $p_n$ gives 
\begin{equation}\label{pnH2}
p_n(z) = - \frac{z^{n+2} - (n+2)z + n+1}{(n+2)(1-z)^2},
\end{equation}
which is precisely equation (2.3)  in \cite{BKLSS}.  (Note that $p_n$ is indeed a polynomial, as the numerator in \eqref{pnH2} has a zero of order 2 at 1.) Thus Corollary \ref{main1} provides a computational tool for finding optimal polynomial approximants $p_n$ by first computing 
the coefficients of $1-p_n f$ in an efficient way.  This method may thus lead to being able to identify optimal polynomial approximants for more difficult examples. 
\end{ex}

Notice that it is possible to use the formula for $p_n$ in \eqref{pnH2} to examine the convergence behavior of $p_n$ on the unit circle $\T.$ We would now like to extend this idea, and use Corollary \ref{main1} to get information about the boundary behavior of the optimal  polynomial approximants to $1/f$ when $f$ is a polynomial, which we turn to in Section \ref{Sec3}.

\section{Wiener norm and boundary behavior of optimal approximants for $H^2$}\label{Sec3}

We now turn to the proofs of Theorems \ref{Wiener} and \ref{pointwise}.  We will begin by giving a proof of 
the case when the space under consideration is $H^2,$ and in Section \ref{Secnew5}, will discuss how to 
extend the results to $A^2$.

Without loss of generality, as discussed in the Introduction, we may choose the polynomial $f$ to be monic.  We assume $f$ has degree $d$ and simple zeros $z_i$ for $i = 1, \ldots, d,$ that all lie in $\C \backslash \D$. Let $p_n$ be the 
$n$-th optimal approximant to $1/f$ in $H^2$, and we would like to show that $\|1-p_nf\|_{A(\T)} \leq C < \infty.$  
Without loss of generality, we suppose that $1 \leq d_1 \leq d$ is
such that $|z_i|=1$ for $i=1,...,d_1$ and $|z_i|> 1$ otherwise. (Note that if $d_1 = 0,$ then $1/f$ is analytic in the closed disc, and therefore
$1-p_n f$ converges to $0$ uniformly in the closed disc, and the result certainly follows.)
We also order the zeros so that $|z_i| \leq |z_{i+1}|$ for all $i$.  

The difficulty of the proof lies mainly in estimating
various determinants of matrices constructed from these zeros, and we will see that there will be a distinction in the decay in terms of $n$ of the coefficients
$A_{i,n}$ from Corollary \ref{main1} related to zeros $z_i$ that lie on the unit circle $\T$ versus the ones that lie outside $\T$.
We will need the following three lemmas, which we will prove in Section \ref{lemmas}.  The first lemma examines a matrix that is relevant to the zeros that lie outside the unit circle.  

\begin{lem}\label{matrixB}
Let $\zeta_1, \ldots, \zeta_s$ be distinct complex numbers such that $|\zeta_i|> 1$ for all $i.$ Then the matrix 
$B:=\left( b_{l,m} \right)_{l,m=1}^s$ with $b_{l,m}= \frac{1}{\zeta_l \, \overline{\zeta_m} - 1}$ is positive definite, and in particular, 
$\det(B) > 0.$
\end{lem}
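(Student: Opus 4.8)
The plan is to pass from the $\zeta_i$ to their reciprocals and exhibit $B$ as a Gram matrix of an explicit family of $\ell^2$ (equivalently, $H^2$) vectors, after which positive semidefiniteness is automatic and the real work is strict positivity. Since $|\zeta_i|>1$ for all $i$, set $w_i:=1/\zeta_i$, so that the $w_i$ are distinct, nonzero, and satisfy $|w_i|<1$. Dividing numerator and denominator by $\zeta_l\overline{\zeta_m}$ rewrites the entries as
\[
b_{l,m} = \frac{1}{\zeta_l\overline{\zeta_m}-1} = \frac{w_l\overline{w_m}}{1-w_l\overline{w_m}} = \sum_{k=1}^{\infty} w_l^{\,k}\,\overline{w_m}^{\,k},
\]
where the geometric series converges because $|w_l\overline{w_m}|<1$. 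In particular $\overline{b_{l,m}}=b_{m,l}$, so $B$ is Hermitian.

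Next I would test positive definiteness directly on the associated quadratic form. For $c=(c_1,\dots,c_s)\in\C^s$, interchanging the finite double sum with the absolutely convergent series gives
\[
\sum_{l,m=1}^s \overline{c_l}\,b_{l,m}\,c_m = \sum_{k=1}^{\infty}\Big(\sum_{l=1}^s \overline{c_l}\,w_l^{\,k}\Big)\Big(\sum_{m=1}^s c_m\,\overline{w_m}^{\,k}\Big) = \sum_{k=1}^{\infty} |T_k|^2 \geq 0,
\]
where $T_k:=\sum_{m=1}^s c_m\,\overline{w_m}^{\,k}$ and I have used that $\sum_l \overline{c_l}\,w_l^{\,k}=\overline{T_k}$. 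This already shows $B$ is positive semidefinite; equivalently, $B$ is the Gram matrix of the vectors $v_i:=(w_i,w_i^2,w_i^3,\dots)\in\ell^2$.

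The one point that requires the hypothesis that the $\zeta_i$ be distinct, and the only genuine obstacle, is upgrading semidefiniteness to strict positivity. If the quadratic form vanishes, then $T_k=0$ for every $k\geq 1$; in particular the first $s$ equations $\sum_{m} c_m\,\overline{w_m}^{\,k}=0$, $k=1,\dots,s$, hold. Writing $y_m:=\overline{w_m}$, which are distinct and nonzero, the coefficient matrix $\big(y_m^{\,k}\big)_{k,m=1}^s$ factors as a Vandermonde matrix $\big(y_m^{\,k-1}\big)_{k,m=1}^s$ times $\mathrm{diag}(y_1,\dots,y_s)$, with determinant $\prod_{i<j}(y_j-y_i)\cdot\prod_m y_m \neq 0$. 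Hence $c=0$, the vectors $v_i$ are linearly independent, and $B$ is positive definite. Finally, a Hermitian positive definite matrix has strictly positive real eigenvalues, so $\det(B)$, being their product, is positive, which yields the last assertion. (One could equally phrase the first two steps via the Szeg\H{o} kernel $k(z,w)=(1-\overline{w}z)^{-1}$, since $b_{l,m}=k(w_l,w_m)-1$, matching the role of reproducing kernels emphasized elsewhere in the paper, but the direct series computation is self-contained.)
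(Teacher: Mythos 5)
Your proof is correct and follows essentially the same route as the paper: both substitute $w_i=1/\zeta_i$ and exhibit $B$ as the Gram matrix associated with the shifted Szeg\H{o} kernels $k(\cdot,w_i)-1$, i.e.\ of the vectors $(w_i,w_i^2,\dots)\in\ell^2$. The only difference is one of explicitness: where the paper asserts the linear independence of these kernels as ``easy to see,'' you verify it concretely via a Vandermonde determinant, which is a harmless (and slightly more self-contained) elaboration of the same argument.
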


The second lemma shows that the determinant of the matrix $E$ in Corollary \ref{main1} can be bounded below by the product of its diagonal terms.

\begin{lem}\label{diagonal}
Let $1 \leq d_1 \leq d$ be integers, and $z_i \in \C$ be such that $|z_i| = 1$ for $ 1 \leq i \leq d_1$ and $|z_i| > 1$ for $d_1 < i \leq d.$   
If $E:= \left( e_{l,m} \right)_{l,m=1}^d$ with $e_{l,m}= \sum_{k=0}^{n+d} z_l^k\,\overline{z_m}^{k},$ then there exists a constant
$\delta > 0,$ independent of $n,$ such that for every $n,$
\begin{equation}\label{DiagEst}
\det(E) \geq \delta \cdot (n+d+1)^{d_1} \cdot \prod_{l=1}^d |z_l|^{2(n+d+1)}. 
\end{equation} 
\end{lem}

The third lemma gives an estimate of the decay of the coefficients $A_{i,n}$ and consequently allows us to estimate the coefficients $d_{k,n}$.

\begin{lem}\label{growthAin}
The coefficients $A_{i,n}$ from Corollary \ref{main1} have the following growth, as $n \rightarrow \infty$:
\begin{equation*}
A_{i,n} = 
\begin{cases}
  O\left( \frac{1}{n+d+1} \right) \text{ for } 1 \leq i \leq d_1\\      
  o\left( \frac{1}{|z_i|^{n+d+1}} \right) \text{ for } d_1 < i \leq d.
\end{cases}
\end{equation*}
Consequently, for each $1 \leq i \leq d_1,$ there exists a constant $C_i$, independent of $n,$ such that 
$$ \sum_{k=0}^{n+d} |A_{i,n} \, \overline{z_i}^k| \leq C_i,$$ while for $d_1 < i \leq d,$ we have
$$ \sum_{k=0}^{n+d} |A_{i,n} \, \overline{z_i}^k| \rightarrow 0 $$ as $n \rightarrow \infty.$
\end{lem}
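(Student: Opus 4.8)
The plan is to reduce the whole statement to the single $d \times d$ linear system $E\,A_n^t = v_0$ coming from Corollary \ref{main1}, and then to read off the two different decay rates from a single renormalization of that system. In the Hardy space $\omega_k \equiv 1$, so $E$ is the Gram matrix of the vectors $u_l := (1, z_l, \ldots, z_l^{\,n+d})$, with $e_{l,m} = \sum_{k=0}^{n+d} (z_l \overline{z_m})^k$. Writing $N := n+d+1$, the decisive move is to renormalize by the diagonal matrix $T := \operatorname{diag}(t_1, \ldots, t_d)$, where $t_l := 1$ for the circle zeros ($1 \leq l \leq d_1$) and $t_l := z_l^{\,N}$ for the exterior zeros ($d_1 < l \leq d$), and to study $F := T^{-1} E\, \overline{T}^{-1}$. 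Since $A_n^t = E^{-1} v_0 = \overline{T}^{-1} F^{-1}\,(T^{-1} v_0)$, this factorization separates the circle and exterior behaviour cleanly, and $A_{i,n} = \overline{t_i}^{\,-1} y_i$ with $y := F^{-1}(T^{-1} v_0)$.

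Next I would establish the block structure of $F$ with respect to the splitting (circle indices)$\cup$(exterior indices). A direct computation of the renormalized geometric sums shows: the circle–circle block equals $N\, I_{d_1} + O(1)$ (its diagonal entries are exactly $N$, the off–diagonal sums are uniformly bounded by constants $2/|z_l\overline{z_m}-1|$); the exterior–exterior block converges entrywise to the matrix $B$ with $b_{l,m} = 1/(z_l \overline{z_m} - 1)$, which is positive definite, hence invertible, by Lemma \ref{matrixB}; and the two cross blocks remain $O(1)$. In parallel, $T^{-1} v_0 = (1, \ldots, 1, z_{d_1+1}^{-N}, \ldots, z_d^{-N})^t$, whose exterior components tend to $0$.

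A block (Schur-complement) inversion of $F$, legitimate for all large $N$ because both $N\,I + O(1)$ and $B + o(1)$ are invertible, then shows that $F^{-1}$ has circle–circle block of size $O(1/N)$, exterior–exterior block converging to $B^{-1}$, and cross blocks of size $O(1/N)$. Multiplying by $T^{-1} v_0$ gives that the circle entries of $y$ are $O(1/N)$, while the exterior entries are $o(1)$ — the vanishing of the exterior components of $T^{-1} v_0$ is exactly what upgrades the exterior estimate from $O(1)$ to $o(1)$. Undoing the scaling via $A_{i,n} = \overline{t_i}^{\,-1} y_i$ yields $A_{i,n} = O(1/N) = O\!\left(1/(n+d+1)\right)$ for $i \leq d_1$, and $A_{i,n} = \overline{z_i}^{\,-N}\, o(1) = o\!\left(|z_i|^{-(n+d+1)}\right)$ for $i > d_1$, as claimed. (Equivalently, one may take the denominator bound $\det E \geq \delta\, N^{d_1} \prod_l |z_l|^{2N}$ directly from Lemma \ref{diagonal} and run the same scaling through Cramer's rule on $\det E^{(i)}$, where after renormalization the replaced column degenerates in the exterior block precisely when $i > d_1$.)

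Finally the two summation consequences are immediate: for $i \leq d_1$ one has $|z_i| = 1$, so $\sum_{k=0}^{n+d} |A_{i,n}\, \overline{z_i}^{\,k}| = (n+d+1)\,|A_{i,n}| = O(1) \leq C_i$; for $i > d_1$ the geometric sum is bounded by $|A_{i,n}|\,|z_i|^{\,n+d+1}/(|z_i|-1) = o(1) \to 0$. I expect the genuine obstacle to be precisely the passage from $O$ to $o$ for the exterior zeros: a crude entrywise bound on $F^{-1}$ only gives $O(|z_i|^{-(n+d+1)})$, and obtaining the sharp little-$o$ requires exploiting the actual decay of the renormalized data $T^{-1} v_0$ in the exterior coordinates. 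The remaining care is bookkeeping: checking that all error terms are uniform in $n$ and that $F$ is invertible for every sufficiently large $N$, which is guaranteed by Lemma \ref{matrixB} together with the $N\,I$ growth of the circle block.
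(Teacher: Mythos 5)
Your argument is correct, but it is organized quite differently from the paper's. The paper also starts from the system $E A_n^t = v_0^t$, but it proceeds by Cramer's rule, writing $A_{i,n} = \det(E^{(i)})/\det(E)$ and expanding both determinants as permutation sums; the denominator is controlled by the lower bound $\det(E) \geq \delta (n+d+1)^{d_1}\prod_l |z_l|^{2(n+d+1)}$ of Lemma \ref{diagonal}, and the numerator is analyzed by tracking, permutation by permutation, which factors of $(n+d+1)$ and of $|z_l|^{2(n+d+1)}$ go missing when the $i$-th column is replaced by $v_0^t$ (the missing factor of $n+d+1$ when $i \leq d_1$, and the missing factor(s) of $|z_i|^{n+d+1}$ when $i > d_1$, are exactly what produce the two decay rates). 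Your diagonal rescaling $F = T^{-1}E\,\overline{T}^{-1}$ followed by a Schur-complement block inversion reaches the same conclusion while bypassing the permutation bookkeeping (the sets $\mathcal{A}$, $\mathcal{B}_j$ and the remainder terms $r(n)$, $R(n)$ in the paper), and it only invokes Lemma \ref{matrixB} (positive definiteness of the Szeg\H{o}-type matrix $B$) rather than the full determinant estimate of Lemma \ref{diagonal} --- indeed your block structure would reprove that estimate as a byproduct, since $\det E = \det T\,\det\overline{T}\,\det F$ and $\det F \sim N^{d_1}\det B$. You also correctly isolate the one genuinely delicate point, namely that the little-$o$ for exterior zeros comes from the decay of the exterior coordinates of $T^{-1}v_0^t$ together with the $O(1/N)$ size of the cross blocks of $F^{-1}$, which is precisely the role played in the paper by the ``missing'' factors $|z_i|^{2(n+d+1)}$, $|z_{i^*}z_i|^{n+d+1}$ and $(n+d+1)|z_i|^{n+d+1}$ in the three groups of terms of $\det(E^{(i)})$. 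The two derivations rest on the same structural facts and yield the same estimates; yours is arguably cleaner to check, while the paper's is the one that generalizes verbatim to the Bergman-weight computation of Section \ref{Secnew5}.
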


Assuming these lemmas for the moment, we can now prove Theorem \ref{Wiener} for the Hardy space $H^2$. 

\begin{proof}[Proof of Theorem \ref{Wiener}.]
Recall from Corollary \ref{main1} that $\left( 1-p_n f \right)(z) = \sum_{k=0}^{n+d} d_{k,n} z^k,$ where
$d_{k,n} = \sum_{i=1}^d A_{i,n} \, \overline{z_i}^k,$ and $A_{i,n}$ satisfy the linear equation specified in the theorem. 
Therefore, the Wiener norm can be estimated as follows:
\begin{equation*}
\|1-p_nf\|_{A(\T)}  =  \sum_{k=0}^{n+d} |d_{k,n}|  = \sum_{k=0}^{n+d} \left|  \sum_{i=1}^d A_{i,n} \, \overline{z_i}^k \right| 
 \leq \sum_{i=1}^d \sum_{k=0}^{n+d}    \left|A_{i,n} \, \overline{z_i}^k \right|. 
\end{equation*}
Now use Lemma \ref{growthAin} to estimate the last quantity above, and conclude that 
\begin{equation*}
\|1-p_nf\|_{A(\T)} \leq \sum_{i=1}^{d_1} C_i + o(1) \leq C < \infty,
\end{equation*}
for some positive constant $C$, thus proving the theorem for $H^2.$
\end{proof}

The estimates we obtained for the coefficients $A_{i,n}$ allow us to get even more precise information about the behavior of 
$1-p_nf$ as stated in Theorem \ref{pointwise}, which we now prove, again for the Hardy space $H^2.$

\begin{proof}[Proof of Theorem \ref{pointwise}]
First note that since point evaluations in $H^2_{\omega}$ are bounded and $f$ is cyclic, $1-p_nf$ converges uniformly to $0$ on compact subsets of $\D$. Therefore, it suffices to 
prove the result for compact subsets $ K \subset \overline{\D}  \backslash \{ z_1, \ldots, z_d \}$ that avoid $1/\overline{z_i}$ for $d_1 < i \leq d.$ 
Let $K$ be such a compact set.  Then by Corollary \ref{main1}, for each $z \in K,$
$$(1-p_n f )(z) = \sum_{k=0}^{n+d} \left( \sum_{i=1}^d A_{i,n} \, \overline{z_i}^k \right) z^k = \sum_{i=1}^d A_{i,n} \cdot \frac{1 - (\overline{z_i} \, z)^{n+d+1}}{1 - \overline{z_i}\,z}.$$ 
Now for $ 1 \leq i \leq d_1,$ the term $ \frac{1 - (\overline{z_i} \, z)^{n+d+1}}{1 - \overline{z_i}\,z}$ is uniformly bounded on $K,$ while according to 
Lemma \ref{growthAin}, $A_{i,n}$ goes to $0$ as $n \rightarrow \infty$.  On the other hand, if $d_1 < i \leq d,$ then 
there exists a positive constant $C$ such that
for each $z \in K,$ 
$$  \left| A_{i,n} \cdot \frac{1 - (\overline{z_i} \, z)^{n+d+1}}{1 - \overline{z_i}\,z} \right| \leq C |A_{i,n}| |z_i|^{n+d+1},$$
which again by Lemma \ref{growthAin}, goes to $0$ as $n \rightarrow \infty,$ thus concluding the proof of Theorem \ref{pointwise} for $H^2.$ 
\end{proof}

\section{Proofs of technical lemmas for $H^2$}\label{lemmas}

We now turn to the proofs of the lemmas, starting with Lemma \ref{matrixB}.

\begin{proof}[Proof of Lemma \ref{matrixB}.]
Let $\zeta_1, \ldots, \zeta_s$ be distinct complex numbers such that $|\zeta_i|> 1$ for all $i,$ and $B$ the matrix with entries 
$b_{l,m}= \frac{1}{\zeta_l \, \overline{\zeta_m} - 1}.$ Writing $w_i : = \frac{1}{\zeta_i},$ we have that 
$$ b_{l,m} = \frac{1}{\frac{1}{w_l} \cdot \frac{1}{\overline{w_m}} - 1} = \frac{w_l \, \overline{w_m}}{1 - w_l \, \overline{w_m}} = \frac{1}{1 - w_l \, \overline{w_m}} - 1 = k(w_l,w_m) - 1,$$
where $k(z,w)$ is the Szeg\H{o} kernel, which is reproducing for $H^2$. Now notice that 
 $k(z,w) - 1$ is the reproducing kernel for the subspace of $H^2$ consisting of functions $f \in H^2$ that vanish 
at the origin, since for such $f$, we have 
$$ \left< f, k( \cdot, w) - 1 \right>_\omega = f(w) - f(0) = f(w).$$
Writing $K(z,w):= k(z,w) - 1,$ we conclude that 
$$ b_{l,m} = K(w_l,w_m) = \left< K( \cdot, w_m),  K( \cdot, w_l) \right>_\omega,$$
that is, $B$ is a Gramian. Since $K(z,w)$ is a reproducing kernel and since the points $w_1, \ldots, w_s$ are distinct, 
it is easy to see that the functions $K(z, w_1), \dots, K(z, w_s)$ are linearly independent, and thus, since the Gramian of a set of 
linearly independent vectors is positive definite, 
 the matrix $B$ is positive definite, 
and in particular $\det(B) > 0.$

\end{proof}

We will now use Lemma \ref{matrixB} to prove Lemma \ref{diagonal}. 

\begin{proof}[Proof of Lemma \ref{diagonal}.]

Let $E:= \left( e_{l,m} \right)_{l,m=1}^d$ be as in Lemma \ref{diagonal}. In what follows, we will use the notation $\per$ to denote the set of all permutations of the indices $\{ 1, \ldots, d \},$ $\s$ to denote the parity of a particular permutation $\sigma \in \per,$ and $\text{id}$ to denote the identity permutation.  Then we have, by the definition of determinant,
\begin{equation*}
\det(E) = \sum_{\sigma \in \per} \left[  \s  \prod_{l=1}^d e_{l,\sigma(l)} \right]   = \sum_{\sigma \in \per} \left[ \s \prod_{l=1}^d \left( \sum_{k=0}^{n+d} z_l^k \, \overline{z_{\sigma(l)}}^k \right)\right].
\end{equation*}
Let us decompose this sum depending on the number of indices a given permutation fixes.  Recall that
$1 \leq d_1 \leq d$ is
such that $|z_i|=1$ for $i=1,...,d_1$ and $|z_i|> 1$ otherwise.
 Let $\mathcal{A}$ be the set of all permutations $\sigma \in \per$ such that $\sigma(i) = i$ for every $1 \leq i \leq d_1$, and, for each 
$0 \leq j < d_1$, and let $\mathcal{B}_j$ be the set of all permutations $\sigma \in \per$ that fix exactly $j$ of the indices in the set $\{ 1, \ldots, d_1\}.$  Then 
\begin{align}
\det(E) =  \prod_{l=1}^d \left( \sum_{k=0}^{n+d} |z_l|^{2k} \right) & + \sum_{\sigma \in \mathcal{A} \backslash \{ \text{id} \}} \s  \prod_{l=1}^d  \left( \sum_{k=0}^{n+d} z_l^k \, \overline{z_{\sigma(l)}}^k \right) \label{breakdown1} \\
	& +  \sum_{j=0}^{d_1-1} \sum_{\sigma \in \mathcal{B}_j} \s  \prod_{l=1}^d  \left( \sum_{k=0}^{n+d} z_l^k \, \overline{z_{\sigma(l)}}^k \right).\label{breakdown2}
\end{align}
Now notice that if $l \neq \sigma(l)$, or if $l = \sigma(l) > d_1,$ then $z_l \, \overline{z_{\sigma(l)}} \neq 1,$ and so 
\begin{equation}\label{estimatesum}
\sum_{k=0}^{n+d} z_l^k \, \overline{z_{\sigma(l)}}^k = \frac{1 - \left( z_l \, \overline{z_{\sigma(l)}}\right)^{n+d+1}}{1 -z_l \, \overline{z_{\sigma(l)}}} = 
 \left( z_l \, \overline{z_{\sigma(l)}}\right)^{n+d+1} \cdot C(l, \sigma, n),
\end{equation}
where 
$$ C(l, \sigma, n) = \frac{\frac{1}{\left( z_l \, \overline{z_{\sigma(l)}}\right)^{n+d+1}}-1}{1 -z_l \, \overline{z_{\sigma(l)}}},$$ which is bounded above, and if either $l$ or $\sigma(l)$ is greater than $d_1$, then 
\begin{equation}\label{Cln}
C(l, \sigma, n)  \rightarrow \frac{1}{z_l \, \overline{z_{\sigma(l)}}-1} \quad \text{ as } n \rightarrow \infty.
\end{equation}
On the other hand, if $ l = \sigma(l) \leq d_1,$ then 
$$\sum_{k=0}^{n+d} z_l^k \, \overline{z_{\sigma(l)}}^k = n + d + 1.$$

Therefore, the first summand in \eqref{breakdown1} is equal to
\begin{equation}\label{I}
(n+d+1)^{d_1} \cdot \left( \prod_{l=d_1+1}^d |z_l|^{2(n+d+1)} C(l, \text{id}, n) \right).
\end{equation}
We can also compute the second summand in \eqref{breakdown1} and it is equal to
\begin{equation}\label{II}
\sum_{\sigma \in \mathcal{A}\backslash \{ \text{id} \}} \s  (n+d+1)^{d_1} \cdot \left( \prod_{l=d_1+1}^d \left( z_l \, \overline{z_{\sigma(l)}} \right)^{n+d+1}  C(l, \sigma, n) \right).
\end{equation}
Finally, the summand in \eqref{breakdown2} consists of sums similar to those giving \eqref{II} except involving powers $(n+d+1)^j,$ with $0 \leq j < d_1-1$ and products over some subset of indices $l$. 

Now notice that if $\sigma \in \mathcal{A},$ since $\sigma$ is bijective from $\{ d_1+1, \ldots, d \}$ to itself,
we have
$$ \prod_{l=d_1+1}^d \left( z_l \, \overline{z_{\sigma(l)}} \right)^{n+d+1} = \prod_{l=d_1+1}^d |z_l|^{2(n+d+1)} =  \prod_{l=1}^d |z_l|^{2(n+d+1)}.$$

Therefore, the determinant of $E$ is equal to
\begin{align}
& (n+d+1)^{d_1}  \cdot \left( \prod_{l=1}^d |z_l|^{2(n+d+1)} \right) \cdot \\
 & \left\{ \prod_{l=d_1+1}^{d} C(l, \text{id}, n)  +  \sum_{\sigma \in \mathcal{A}\backslash \{ \text{id} \}} \s \prod_{l=d_1+1}^{d} C(l, \sigma, n) + r(n) \right\}, \label{detE2}
\end{align}
where $r(n)$ denotes the remainder terms.  
Now, note that \[e_{l,m} = \left< k_{n+d} (z_l, \cdot), k_{n+d}(z_m,\cdot)\right>_{\omega},\] which implies that $E$ is a Gram matrix and by Theorem \ref{main}, it is invertible and thus positive definite. We can conclude that $\det(E)>0$ (for all $n$). 
In addition, $r(n) \rightarrow 0$  as $n \rightarrow \infty.$ On the other hand, by \eqref{Cln} and since if $\sigma \in \mathcal{A},$ we can think of $\sigma$ as a permutation of the indices $d_1+1, \ldots, d,$ 
we have that 
$$ \lim_{n \rightarrow \infty} \left( \prod_{l=d_1+1}^{d} C(l, \text{id}, n)  +  \sum_{\sigma \in \mathcal{A}\backslash \{ \text{id} \}} \s \prod_{l=d_1+1}^{d} C(l, \sigma, n) \right) = \det(B),$$ 
where $B:= \left( b_{l,m} \right)_{l,m=d_1+1}^{d}$ is defined by $b_{l,m} = \frac{1}{z_l \, \overline{z_m}-1}.$
 Therefore, by Lemma \ref{matrixB}, $B$ is positive definite, and so $\det(B) > 0.$ 
Thus, by \eqref{detE2} and since $\det(E) > 0$,
the quantity 
$$ \left\{ \prod_{l=d_1+1}^{d} C(l, \text{id}, n)  +  \sum_{\sigma \in \mathcal{A}\backslash \{ \text{id} \}} \s \prod_{l=d_1+1}^{d} C(l, \sigma, n) + r(n) \right\}$$ is strictly positive for all $n$, and converges as $n \rightarrow \infty$ to a positive quantity, and is therefore bounded below by some constant $\delta > 0.$ Hence 
$$ \det(E) \geq \delta \cdot (n+d+1)^{d_1} \cdot \prod_{l=1}^d |z_l|^{2(n+d+1)},$$ as desired.
\end{proof}

Notice that Lemma \ref{diagonal} essentially shows that the size of the determinant of $E$ as $n$ becomes large is comparable to the product of its diagonal terms. 

Let us now prove Lemma \ref{growthAin}.

\begin{proof}[Proof of Lemma \ref{growthAin}.]
Recall from Corollary \ref{main1} that the coefficients $A_{i,n}$ are obtained as the solution to the linear system $E \cdot A_n^t = v_0^t,$ where $v_0:= (1,...,1) \in \C^d.$  Therefore by Cramer's rule, 
if $E^{(i)}:=(e^{(i)}_{l,m})_{l,m =1}^d $ denotes the matrix obtained from $E$ by replacing the $i$-th column of $E$ by $v_0^t$, we have that $A_{i,n} = \frac{\det(E^{(i)})}{\det(E)}.$  Therefore, $\det(E^{(i)})$ is given by 
\begin{equation}\label{Ei}
\sum_{\sigma \in \per} \s  \prod_{l=1}^d e_{l,\sigma(l)}^{(i)}  = \sum_{\sigma \in \per} \s \cdot\prod_{l=1, \sigma(l) \neq i}^d \left( \sum_{k=0}^{n+d} z_l^k \, \overline{z_{\sigma(l)}}^k \right).
\end{equation}
Now if $1 \leq i \leq d_1,$ then arguing as in Lemma \ref{diagonal}, since in all the sums $\sigma (l) \neq i,$ the highest power of $n+d+1$ that can appear in any term of the expression of $\det (E^{(i)})$ is 
$(n+d+1)^{d_1-1}$, multiplied by a product that is bounded above by a constant multiple of $\prod_{l=1}^d |z_l|^{2(n+d+1)}.$  Therefore, there exists a positive constant $C_1$ such that
$$ |\det(E^{(i)})| \leq C_1 \cdot (n+d+1)^{d_1-1} \cdot \prod_{l=1}^d |z_l|^{2(n+d+1)}.$$ Now applying Lemma \ref{diagonal} gives that, for $ 1 \leq i \leq d_1,$ we have 
$ A_{i,n} = O\left( \frac{1}{n+d+1} \right)$ as $n \rightarrow \infty.$ 

On the other hand, suppose now that $d_1 < i \leq d.$  Recall that 
$\mathcal{A}$ is the set of all permutations $\sigma \in \per$ such that $\sigma(j) = j$ for every $1 \leq j \leq d_1.$
Then again, arguing as in Lemma \ref{diagonal}, 
\begin{align}
& \det(E^{(i)})  =  (n+d+1)^{d_1} \cdot \prod_{\stackrel{l=d_1+1}{l \neq i}}^d  \frac{1- |z_l|^{2(n+d+1)}}{1 - |z_l|^2}  \label{Ei1} \\
&+ \sum_{\sigma \in \mathcal{A} \backslash \{ \text{id} \}} \s  (n+d+1)^{d_1} \cdot  \prod_{\stackrel{l=d_1+1}{\sigma(l) \neq i}}^d  \frac{1- (z_l \, \overline{z_{\sigma(l)}})^{n+d+1}}{1 - z_l \, \overline{z_{\sigma(l)}}}  \label{Ei2} \\
&  + R(n), \label{Ei3}
\end{align}
where $R(n)$ denotes the remainder terms.  Now note that in \eqref{Ei1}, the product is missing a term of order $|z_i|^{2(n+d+1)}$.  In \eqref{Ei2}, each product is missing a term of order
$|z_{i^*}|^{n+d+1} \cdot |z_i|^{n+d+1}$, where $i^* := \sigma^{-1}(i) > d_1,$ and hence by  Lemma \ref{diagonal}, after division by $\det(E)$,  has order of decay at most, say, $|z_i z_{d_1+1}|^{-(n+d+1)}.$
Finally, in \eqref{Ei3}, the highest power of $n+d+1$ that appears is $(n+d+1)^{d_1-1},$ and each product is missing at least one term of order  $|z_i|^{n+d+1}$.  Therefore, after division by $\det(E)$
and using Lemma \ref{diagonal}, we can conclude that 
$ A_{i,n} = \frac{\det(E^{(i)})}{\det(E)} $
has order of decay at most 
$$O \left( \frac{1}{|z_i|^{2(n+d+1)}} +  \frac{1}{|z_i|^{n+d+1} \cdot |z_{d_1+1}|^{n+d+1}} + \frac{1}{(n+d+1) \cdot |z_i|^{n+d+1}} \right),$$ and therefore we obtain that
$ A_{i,n} = o\left( \frac{1}{|z_i|^{n+d+1}} \right)$ as $n \rightarrow \infty,$ as desired.  

Using these estimates, it is now easy to see that for $1 \leq i \leq d_1,$ there is a constant $C_i$ such that
$$ \sum_{k=0}^{n+d} |A_{i,n} \, \overline{z_i}^k| \leq \frac{C_i}{n+d+1}  \cdot   \sum_{k=0}^{n+d} |\overline{z_i}^k| = C_i,$$
while if $d_1 < i \leq d,$
$$ \sum_{k=0}^{n+d} |A_{i,n} \, \overline{z_i}^k| = |A_{i,n}| \cdot \frac{1 - |z_i|^{n+d+1}}{1 - |z_i|}  \rightarrow 0 $$ as $n \rightarrow \infty,$ and the proof of Lemma \ref{growthAin} is complete. 

\end{proof}

\section{Wiener norm and boundary behavior of optimal approximants for $A^2$}\label{Secnew5}

In order to prove Theorems \ref{Wiener} and \ref{pointwise} for the Bergman space $A^2$ (i.e., when $\omega_k = \frac{1}{k+1}$), we need good estimates of the 
partial sums of the reproducing kernel $k(z,w)$ when evaluated at points $z$ and $w$ that are on the unit circle or outside the closed unit disc.  With such
estimates,  one can obtain analogous versions of Lemma \ref{diagonal} and Lemma \ref{growthAin}.  More specifically, we have the following.

\begin{lem}\label{BergmanKernel}
Let $1 \leq d_1 \leq d$ be integers, and $z_i \in \C$ be such that $|z_i| = 1$ for $ 1 \leq i \leq d_1$ and $|z_i| > 1$ for $d_1 < i \leq d.$
Let $ \omega_k = \frac{1}{k+1},$ $ 1 \leq l \leq d$, and let $\sigma$ be a permutation of $\{ 1, 2, \ldots, d \}$ such that $z_l \cdot \overline{z_{\sigma(l)}} \neq 1.$ Then 
\begin{equation}\label{sigmaone}
\sum_{k = 0}^{n+d} \frac{z_l^k \overline{z_{\sigma(l)}}^k}{\omega_k} = (n + d + 2) \left( z_l \overline{z_{\sigma(l)}}\right)^{n+d+1}  \cdot C(l,\sigma,n),
\end{equation}
where $ C(l,\sigma,n) \rightarrow  \frac{1}{z_l  \overline{z_{\sigma(l)}}-1}$ as $n \rightarrow \infty.$
\end{lem}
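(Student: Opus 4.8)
The plan is to use the defining feature of the Bergman weights, namely that $1/\omega_k = k+1$, which turns the left-hand side of \eqref{sigmaone} into an arithmetico-geometric sum rather than the plain geometric sum \eqref{estimatesum} that appeared in the $H^2$ analysis. Setting $w := z_l \overline{z_{\sigma(l)}}$ (so $w \neq 1$ by hypothesis) and $N := n+d$, I would first record the closed form
\[
\sum_{k=0}^{N}(k+1)w^k = \frac{1 - (N+2)w^{N+1} + (N+1)w^{N+2}}{(1-w)^2},
\]
which follows by differentiating the geometric sum $\sum_{k=0}^{N}w^{k+1} = (w - w^{N+2})/(1-w)$ with respect to $w$ (or by a one-line induction).

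Next I would factor $(N+2)w^{N+1} = (n+d+2)w^{n+d+1}$ out of the numerator, which reproduces exactly the form stated in \eqref{sigmaone} with
\[
C(l,\sigma,n) = \frac{\frac{1}{(n+d+2)\,w^{n+d+1}} - 1 + \frac{n+d+1}{n+d+2}\,w}{(1-w)^2}.
\]
It then remains to compute the limit. The one observation that does all the work is that every zero satisfies $|z_i| \geq 1$, so $|w| = |z_l|\,|z_{\sigma(l)}| \geq 1$; consequently $|(n+d+2)w^{n+d+1}| \geq n+d+2$, and the first term in the numerator is $O(1/n)$ and vanishes in the limit. Since the middle term tends to $w$, I obtain
\[
C(l,\sigma,n) \longrightarrow \frac{w-1}{(1-w)^2} = \frac{1}{w-1} = \frac{1}{z_l \overline{z_{\sigma(l)}} - 1},
\]
as required.

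There is no real obstacle here beyond bookkeeping; the only subtle point is that the hypothesis gives $|z_i| \geq 1$ rather than $|z_i| > 1$, and it is precisely this weak inequality that still forces $|w| \geq 1$ and hence kills the $1/((n+d+2)w^{n+d+1})$ term, whether $z_l$ and $z_{\sigma(l)}$ lie on $\T$ or strictly outside it. This makes \eqref{sigmaone} the Bergman-space counterpart of \eqref{estimatesum}--\eqref{Cln}, and with the extra factor $(n+d+2)$ playing the role of the constant-$1$ summand of $H^2$, the combinatorial arguments of Lemmas \ref{diagonal} and \ref{growthAin} can be rerun to obtain the corresponding determinant and coefficient estimates for $A^2$.
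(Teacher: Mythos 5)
Your proof is correct and follows essentially the same route as the paper: both obtain the closed form $\sum_{k=0}^{N}(k+1)w^k = \bigl(1-(N+2)w^{N+1}+(N+1)w^{N+2}\bigr)/(1-w)^2$ by differentiating the geometric sum, isolate the dominant term $(N+2)w^{N+1}/(w-1)$ using $|w|=|z_l|\,|z_{\sigma(l)}|\geq 1$, and read off the limit of $C(l,\sigma,n)$. Your write-up is if anything slightly more explicit than the paper's (which simply absorbs the subordinate term into a $1+o(1)$ factor), and your emphasis on the weak inequality $|w|\geq 1$ being exactly what is needed is well placed.
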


\begin{proof}
Let $\omega_k = \frac{1}{k+1},$ let $|z| \geq 1,$ $ z \neq 1,$ and let $N$ be an integer.  Then the partial sum of the reproducing kernel for the Bergman space equals
\begin{equation*}
\sum_{k=0}^N \frac{z^k}{\omega_k} = \left( \sum_{k=0}^N z^{k+1} \right)' = \left( \frac{z(1-z^{N+1})}{1-z} \right)'.
\end{equation*}
A direct calculation shows that the latter is equal to
\begin{equation*}
\frac{1-z^{N+2}}{(1-z)^2} + \frac{(N+2)z^{N+1}}{z - 1},
\end{equation*}
and thus, 
\begin{equation}\label{BEst}
\sum_{k=0}^N \frac{z^k}{\omega_k} = \frac{(N+2)z^{N+1}}{z - 1} \left[ 1 + o(1) \right].
\end{equation}
Applying \eqref{BEst} to $N = n+d$ and $z = z_l  \overline{z_{\sigma(l)}}$ gives the desired result. 
\end{proof}

\begin{rem}
Whenever we can find an analogue to \eqref{sigmaone} for other spaces $H^2_\omega$, we expect that the limit as $n \rightarrow \infty$ of $C(l , \sigma, n)$ will remain unchanged. This 
seems to indicate that the Szeg\H{o} kernel plays a key role in diverse classes of weighted Hardy spaces. 
\end{rem}

The above estimate allows us to get the following version of Lemma \ref{diagonal}.

\begin{lem}\label{diagonalBergman}
Let $ \omega_k = \frac{1}{k+1},$  and let $d$, $d_1$, and $z_i$ be as in Lemma \ref{BergmanKernel}. 
If $E:= \left( e_{l,m} \right)_{l,m=1}^d$ with $e_{l,m}= \sum_{k=0}^{n+d} \frac{z_l^k\,\overline{z_m}^{k}}{\omega_k},$ then there exists a constant
$\delta > 0,$ independent of $n,$ such that for every $n,$
\begin{equation}\label{DiagEst}
\det(E) \geq \delta \cdot (n+d+1)^{d+d_1} \cdot \prod_{l=1}^d |z_l|^{2(n+d+1)}. 
\end{equation}
\end{lem}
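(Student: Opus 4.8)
The plan is to mirror the proof of Lemma \ref{diagonal} almost verbatim, changing only the two elementary sums that drove that argument. Expanding $\det(E)$ by the Leibniz formula gives
$$\det(E)=\sum_{\sigma\in\per}\s\prod_{l=1}^{d}\left(\sum_{k=0}^{n+d}\frac{z_l^k\,\overline{z_{\sigma(l)}}^k}{\omega_k}\right),$$
which we again organize according to how many of the indices $\{1,\ldots,d_1\}$ a given permutation fixes (the sets $\mathcal{A}$ and $\mathcal{B}_j$ as in the proof of Lemma \ref{diagonal}). Only two inputs change. First, when $l=\sigma(l)\le d_1$ the corresponding factor is now $\sum_{k=0}^{n+d}\frac{1}{\omega_k}=\sum_{k=0}^{n+d}(k+1)=\frac{(n+d+1)(n+d+2)}{2}$, of order $(n+d+1)^2$ rather than the order $(n+d+1)$ appearing in \eqref{breakdown1}. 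Second, whenever $z_l\,\overline{z_{\sigma(l)}}\neq1$ (which, as noted in the proof of Lemma \ref{diagonal}, holds for all $l\neq\sigma(l)$ and all $l=\sigma(l)>d_1$) the factor is controlled by Lemma \ref{BergmanKernel}, namely $(n+d+2)(z_l\,\overline{z_{\sigma(l)}})^{n+d+1}C(l,\sigma,n)$ with $C(l,\sigma,n)\to(z_l\,\overline{z_{\sigma(l)}}-1)^{-1}$, which replaces \eqref{estimatesum}.

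The decisive bookkeeping step is a power count. If a permutation $\sigma$ fixes exactly $j$ of the indices in $\{1,\ldots,d_1\}$, those $j$ factors each contribute order $(n+d+1)^2$, while the remaining $d-j$ factors each contribute a single power of $(n+d+1)$ together with a bounded geometric factor; hence the corresponding term has order $(n+d+1)^{2j+(d-j)}=(n+d+1)^{d+j}$. This is maximized precisely at $j=d_1$, i.e. on $\mathcal{A}$ (the permutations fixing all of $\{1,\ldots,d_1\}$), producing the leading order $(n+d+1)^{d+d_1}$; every $\sigma\in\mathcal{B}_j$ with $j<d_1$ contributes at most order $(n+d+1)^{d+d_1-1}$ and is absorbed into a remainder $r(n)\to0$.

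For $\sigma\in\mathcal{A}$, since $\sigma$ restricts to a bijection of $\{d_1+1,\ldots,d\}$ we have $\prod_{l=d_1+1}^{d}(z_l\,\overline{z_{\sigma(l)}})^{n+d+1}=\prod_{l=1}^{d}|z_l|^{2(n+d+1)}$, exactly as in Lemma \ref{diagonal}. Factoring out $(n+d+1)^{d+d_1}\prod_{l=1}^{d}|z_l|^{2(n+d+1)}$, the analogue of \eqref{detE2} becomes
$$\det(E)=(n+d+1)^{d+d_1}\Big(\prod_{l=1}^{d}|z_l|^{2(n+d+1)}\Big)\left\{\frac{1}{2^{d_1}}\Big(\tfrac{n+d+2}{n+d+1}\Big)^{d}\sum_{\sigma\in\mathcal{A}}\s\prod_{l=d_1+1}^{d}C(l,\sigma,n)+r(n)\right\},$$
where the factor $1/2^{d_1}$ comes from the $d_1$ unit-circle diagonal terms and the ratio $(\tfrac{n+d+2}{n+d+1})^{d}\to1$. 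By Lemma \ref{BergmanKernel} the inner sum converges to $\det(B)$ for $B=\big(1/(z_l\overline{z_m}-1)\big)_{l,m=d_1+1}^{d}$, so the braced quantity converges to $\det(B)/2^{d_1}$, which is positive by Lemma \ref{matrixB}. Finally, since $e_{l,m}=\langle k_{n+d}(z_l,\cdot),k_{n+d}(z_m,\cdot)\rangle_\omega$, the matrix $E$ is a Gram matrix of linearly independent kernels, hence positive definite with $\det(E)>0$ for every $n$ (Theorem \ref{main}); as the other factored quantities are manifestly positive, the braced term is strictly positive for every $n$ and converges to a positive limit, whence it is bounded below by some $\delta>0$. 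This yields \eqref{DiagEst}.

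The main obstacle here is combinatorial rather than analytic: one must carefully verify the power count so that the maximal exponent is $d+d_1$ and is attained only on $\mathcal{A}$, and correctly track the constant $1/2^{d_1}$ produced by the quadratic growth of the unit-circle diagonal entries. Once the factorization is in place, the positivity of the limiting coefficient (via Lemma \ref{matrixB}) and of $\det(E)$ for all $n$ (via the reproducing-kernel/Gram-matrix structure) are inherited verbatim from the argument of Lemma \ref{diagonal}.
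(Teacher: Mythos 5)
Your proposal is correct and is precisely the argument the paper intends: the paper explicitly states that Lemma \ref{diagonalBergman} is proved "similarly to Lemma \ref{diagonal}, using instead the estimates from Lemma \ref{BergmanKernel}," and leaves the details to the reader, which you have filled in accurately (including the correct power count $2j+(d-j)=d+j$, maximized on $\mathcal{A}$, and the harmless extra constant $1/2^{d_1}$ in the limiting coefficient). No gaps.
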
 

The proof is similar to that of Lemma \ref{diagonal}, using instead the estimates from Lemma \ref{BergmanKernel}, and the details are left to the reader.

Lemma \ref{diagonalBergman} in turn allows us to obtain estimates on the decay of the coefficients $A_{i,n}$. Again, the proof is similar to the one for Lemma \ref{growthAin} and is omitted.

\begin{lem}\label{growthAinBergman}
Let $ \omega_k = \frac{1}{k+1}.$ Then the coefficients $A_{i,n}$ from Corollary \ref{main1} have the following decay, as $n \rightarrow \infty$:
\begin{equation*}
A_{i,n} = 
\begin{cases}
  O\left( \frac{1}{(n+d+1)^{2}} \right) \text{ for } 1 \leq i \leq d_1\\      \quad \\
  O\left( \frac{1}{(n+d+1)^2|z_i|^{n+d+1}} \right) \text{ for } d_1 < i \leq d.
\end{cases}
\end{equation*}
Consequently, for each $1 \leq i \leq d_1,$ there exists a constant $C_i$, independent of $n,$ such that 
$$ \sum_{k=0}^{n+d} \left| A_{i,n} \frac{\overline{z_i}^k}{\omega_k} \right| \leq C_i,$$ while for 
$d_1 < i \leq d,$ we have
$$ \sum_{k=0}^{n+d} \left| A_{i,n} \frac{\overline{z_i}^k}{\omega_k} \right| \rightarrow 0 $$ as $n \rightarrow \infty.$
\end{lem}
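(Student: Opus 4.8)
The plan is to follow verbatim the Cramer's-rule argument of Lemma \ref{growthAin}, substituting the Hardy partial kernels by their Bergman counterparts. By Corollary \ref{main1} the vector $A_n$ solves $E A_n^t = v_0^t$, so $A_{i,n} = \det(E^{(i)})/\det(E)$, where $E^{(i)}$ is $E$ with its $i$-th column replaced by $v_0^t = (1,\dots,1)^t$. The denominator is already pinned down by Lemma \ref{diagonalBergman}, namely $\det(E)\ge \delta (n+d+1)^{d+d_1}\prod_{l=1}^d|z_l|^{2(n+d+1)}$, so the whole task is to produce a matching upper bound for $|\det(E^{(i)})|$ and then divide.

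The one genuinely new input is the size of the individual entries. For $z_l\overline{z_m}\neq 1$, Lemma \ref{BergmanKernel} gives $e_{l,m}$ of order $(n+d+1)\,(z_l\overline{z_m})^{n+d+1}$, while for an \emph{on-circle} diagonal entry ($l,m\le d_1$, $z_l\overline{z_m}=1$) the sum degenerates to $\sum_{k=0}^{n+d}\tfrac{1}{\omega_k}=\sum_{k=0}^{n+d}(k+1)=\tfrac{(n+d+1)(n+d+2)}{2}$, of order $(n+d+1)^2$. Thus each fixed on-circle index now contributes a factor of order $(n+d+1)^2$ rather than $(n+d+1)$; this single change is what turns the exponent $d_1$ of Lemma \ref{growthAin} into $d+d_1$ throughout, and it is the mechanism behind the squared decay rates claimed here.

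With this in hand I would expand $\det(E^{(i)})=\sum_{\sigma\in\per}\s\prod_{l:\sigma(l)\ne i}e_{l,\sigma(l)}$ and group the permutations by how many indices of $\{1,\dots,d_1\}$ they fix, exactly as in the proof of Lemma \ref{diagonal}, tracking for each group both the power of $(n+d+1)$ and the magnitude $\prod_{l:\sigma(l)\neq i}|z_l\overline{z_{\sigma(l)}}|^{n+d+1}$; a short bijectivity computation shows that this magnitude equals $\prod_{l=1}^d|z_l|^{2(n+d+1)}\big/\big(|z_{\sigma^{-1}(i)}|^{n+d+1}|z_i|^{n+d+1}\big)$, depending on $\sigma$ only through $\sigma^{-1}(i)$. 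For $1\le i\le d_1$ the constraint $\sigma(l)\ne i$ destroys one on-circle fixed point, so the top surviving power is $(n+d+1)^{d+d_1-2}$ times a bounded multiple of $\prod_l|z_l|^{2(n+d+1)}$; dividing by $\det(E)$ yields $A_{i,n}=O\big((n+d+1)^{-2}\big)$. For $d_1<i\le d$ I would keep the three-family split of Lemma \ref{growthAin}: the identity drops $e_{i,i}$ and loses $|z_i|^{2(n+d+1)}$; the permutations in $\mathcal{A}\setminus\{\text{id}\}$ lose $|z_{\sigma^{-1}(i)}z_i|^{n+d+1}$ with $\sigma^{-1}(i)>d_1$; and the remaining permutations, which route some on-circle index onto $i$, keep only $|z_i|^{n+d+1}$ in the denominator while still carrying $(n+d+1)^{d+d_1-2}$. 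After dividing by $\det(E)$ these give, respectively, $O\big(\tfrac{1}{(n+d+1)|z_i|^{2(n+d+1)}}\big)$, $O\big(\tfrac{1}{(n+d+1)|z_{d_1+1}z_i|^{n+d+1}}\big)$ and $O\big(\tfrac{1}{(n+d+1)^2|z_i|^{n+d+1}}\big)$; since $|z_{d_1+1}|,|z_i|>1$ the first two are $o$ of the third, so $A_{i,n}=O\big(\tfrac{1}{(n+d+1)^2|z_i|^{n+d+1}}\big)$.

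Finally, the two summation estimates drop out by pairing each rate against the weight. For $1\le i\le d_1$ one has $|\overline{z_i}^k|=1$ and $\sum_{k=0}^{n+d}\tfrac1{\omega_k}=\tfrac{(n+d+1)(n+d+2)}{2}$, so $\sum_k|A_{i,n}\overline{z_i}^k/\omega_k|=O((n+d+1)^{-2})\cdot O((n+d+1)^2)$ stays bounded by a constant $C_i$; for $d_1<i\le d$ one has $\sum_{k=0}^{n+d}(k+1)|z_i|^k$ of order $(n+d+1)|z_i|^{n+d+1}$, and multiplying by $|A_{i,n}|=O\big(\tfrac{1}{(n+d+1)^2|z_i|^{n+d+1}}\big)$ leaves $O\big((n+d+1)^{-1}\big)\to0$. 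The main obstacle, as in the Hardy case, is the off-circle bookkeeping: one must see that the genuinely dominant family is the remainder (the permutations that swap an on-circle index into the $i$-th slot), which alone realizes the full $(n+d+1)^{-2}|z_i|^{-(n+d+1)}$ rate, whereas the identity and the $\mathcal{A}\setminus\{\text{id}\}$ families each buy an extra exponential factor and are therefore negligible; keeping the ordering $|z_i|\le|z_{i+1}|$, so that $z_{d_1+1}$ has the smallest off-circle modulus, is what makes these comparisons clean.
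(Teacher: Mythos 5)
Your proposal is correct and follows exactly the route the paper intends: the paper omits this proof, stating only that it is ``similar to the one for Lemma \ref{growthAin}'', and your argument is precisely that adaptation --- Cramer's rule with $\det(E)$ bounded below by Lemma \ref{diagonalBergman}, the entry asymptotics from Lemma \ref{BergmanKernel}, and the observation that each on-circle fixed point now contributes $(n+d+1)^2$ instead of $(n+d+1)$, which accounts for the extra factor of $(n+d+1)$ in all the rates. The bookkeeping of the three permutation families and the final pairing of each decay rate against $\sum_{k\le n+d}(k+1)|z_i|^k$ are all sound.
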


Using Lemma \ref{growthAinBergman}, the proofs of Theorems \ref{Wiener} and \ref{pointwise} for the Bergman space
$A^2$ now follow in the same manner as in Section \ref{Sec3}.

\section{General estimates for monotonic weights and roots on the unit circle}\label{new6sect}
In Sections \ref{Sec3} and \ref{Secnew5} we treated the cases of Hardy and Bergman spaces. The proofs there depend on the nature of the corresponding reproducing kernels and hence can not be extended directly to the general case. However, if we assume that the sequence of weights $\{\omega_{k}\}_{k\in\mathbb{N}}$ is monotonic and that $f$ is a polynomial with simple zeros \emph{on the unit circle}, we can derive more general estimates, which simplify the proofs of two of our main results. From here on, we assume that the weight $\omega$ defining the space $H^2_{\omega}$ satisfies that
\begin{equation}\label{eqn999}
\sum_{n=0}^{\infty} \frac{1}{\omega_n} = +\infty,
\end{equation}
and that there exists a constant $C >0$ such that for all $n \in \N$, and all $t \in \{0,...,n+1\}$
\begin{equation}\label{eqn998}
C^{-1} \omega_n \leq \omega_{n+t} \leq C \omega_n.
\end{equation}
The classical weights $\omega_k=(k+1)^{\alpha}$, for Dirichlet-type spaces do clearly have the first property if and only if $\alpha \leq 1$ while they always have the second one with $C=2^{|\alpha|}$. The first condition is used in \cite{FMS} and characterizes the cyclicity of $f(z)=1-z$ in $H^2_{\omega}$, while the doubling condition \eqref{eqn998} will be relevant in the following proof partly because it ensures that 
\begin{equation}\label{eqn997}
\sum_{k=0}^n \frac{\omega_n}{\omega_k} \rightarrow \infty, \quad {\mbox{ as } n} \rightarrow \infty.
\end{equation}

\begin{theorem}\label{Th2r2}
Let $f$ be a polynomial of degree $d$ with simple zeros such that $Z:=Z(f)\subset\mathbb{T}$ and let $K$ be a compact subset of $\overline{\mathbb{D}}\setminus Z$. Suppose $\omega=\{\omega_k\}$ is a monotone sequence that satisfies the conditions \eqref{eqn999} and \eqref{eqn998}. Then there is a positive constant $ C_1(Z,K)$, depending only on the zero set $Z$ and the set $K$, such that
\begin{equation}\label{r2}
\sup _{z \in K} |1-P_n(1)(z)| \leq C_1(Z,K) \cdot \left( \sum _{k=0}^{n}\frac{ s _{n}}{\omega _{k}}\right) ^{-1},
\end{equation}
where $s_n = \min \{1, \omega_n\}$.
In particular, if $p_n$ is the $n$-th optimal approximant to $1/f$ in $H_{\omega}^{2}$, then 
\begin{equation*}
1-p_n f\to 0 \ \ \  \mbox{ as } n\to \infty
\end{equation*}
uniformly on compact subsets of $\overline{\mathbb{D}}\setminus Z$. Moreover, there is a constant $C_2(Z,\omega)>0$ such that for all $n\in\mathbb{N}$,
\begin{equation}
\|1-p_n f\| _{A(\mathbb{T})}\leq  C_2(Z,\omega).
\end{equation}
\end{theorem}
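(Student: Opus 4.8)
The plan is to specialize Theorem \ref{main} and Corollary \ref{main1} to $g\equiv 1$, where $P_n(1)=p_nf$, and to reduce all three conclusions to two ingredients: a uniform bound on the coefficients $A_{i,n}$, and a pointwise bound on the partial kernels $k_{n+d}(z,z_i)$ over $K$. From Corollary \ref{main1} I would recall that
\[
(1-p_nf)(z)=\sum_{i=1}^d A_{i,n}\,k_{n+d}(z,z_i),\qquad d_{k,n}=\frac{1}{\omega_k}\sum_{i=1}^d A_{i,n}\,\overline{z_i}^{\,k},
\]
with $A_n^t=E^{-1}v_0$ and $v_0=(1,\dots,1)$. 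Since $Z\subset\mathbb{T}\subset\mathbb{C}\setminus\mathbb{D}$, the decay estimate \eqref{g-pgdecay} is available, so Corollary \ref{estinv} applies and bounds the entries of $E^{-1}=(u_{i,j})$ by $|u_{i,j}|\le C(Z,\omega)\big(\sum_{k=0}^n 1/\omega_k\big)^{-1}$. As $A_{i,n}=\sum_j u_{i,j}$, summing over $j$ yields the coefficient bound $|A_{i,n}|\le d\,C(Z,\omega)\big(\sum_{k=0}^n 1/\omega_k\big)^{-1}$.

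The heart of the argument is an estimate of $|k_{n+d}(z,z_i)|$ that is uniform for $z\in K$. For such $z$, writing $w=z\overline{z_i}$ one has $|w|\le 1$ and, because $|z_i|=1$, the identity $1-w=\overline{z_i}(z_i-z)$ gives $|1-w|=|z-z_i|\ge \delta_K:=\operatorname{dist}(K,Z)>0$. I would then estimate $\sum_{k=0}^{n+d} w^k/\omega_k$ by Abel summation: the partial sums $S_m=\sum_{k=0}^m w^k=(1-w^{m+1})/(1-w)$ satisfy $|S_m|\le 2/\delta_K$, while $b_k=1/\omega_k$ is monotone with $b_0=1$, so the total variation telescopes to $\sum_k|b_k-b_{k+1}|=|1-1/\omega_{n+d}|$. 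This gives $|k_{n+d}(z,z_i)|\le (2/\delta_K)(1+2/\omega_{n+d})$. Using the doubling condition \eqref{eqn998} to replace $\omega_{n+d}$ by $\omega_n$, and then the elementary inequality $1+1/\omega_n\le 2/s_n$ (checked against $s_n=\min\{1,\omega_n\}$ in the two cases $\omega_n\ge 1$ and $\omega_n<1$), I would conclude
\[
\sup_{z\in K}|k_{n+d}(z,z_i)|\le \frac{C'(\omega)}{\delta_K}\cdot\frac{1}{s_n}.
\]
This Abel-summation step, and in particular extracting exactly the factor $1/s_n$ together with the correct dependence on $\delta_K=\operatorname{dist}(K,Z)$, is the main obstacle; everything else is bookkeeping.

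Combining the coefficient bound with the kernel bound and summing over the $d$ zeros gives $\sup_{z\in K}|1-p_nf(z)|\le C_1(Z,K)\big(s_n\sum_{k=0}^n 1/\omega_k\big)^{-1}=C_1(Z,K)\big(\sum_{k=0}^n s_n/\omega_k\big)^{-1}$, which is \eqref{r2}. For the uniform convergence I would show this right-hand side tends to $0$ by splitting on monotonicity: if $\omega$ is nondecreasing then $\omega_n\ge\omega_0=1$, so $s_n=1$ and the denominator diverges by \eqref{eqn999}; if $\omega$ is nonincreasing then $s_n=\omega_n$, and $\sum_{k=0}^n s_n/\omega_k=\sum_{k=0}^n \omega_n/\omega_k\to\infty$ by \eqref{eqn997}. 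Finally, for the Wiener norm, since $|z_i|=1$,
\[
\|1-p_nf\|_{A(\mathbb{T})}=\sum_{k=0}^{n+d}|d_{k,n}|\le \sum_{i=1}^d|A_{i,n}|\sum_{k=0}^{n+d}\frac{1}{\omega_k},
\]
and \eqref{eqn998} gives $\sum_{k=0}^{n+d}1/\omega_k\le C''\sum_{k=0}^n 1/\omega_k$ (each of the $d$ extra terms is at most $C/\omega_n\le C\sum_{k=0}^n 1/\omega_k$); inserting the bound on $|A_{i,n}|$ makes the two sums cancel and leaves a constant $C_2(Z,\omega)$, completing the proof.
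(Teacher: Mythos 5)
Your proposal is correct and follows essentially the same route as the paper: reduce \eqref{r2} to the coefficient bound from Corollary \ref{estinv} together with a bound $\sup_{z\in K}|k_{n+d}(z,z_i)|\lesssim 1/s_n$ obtained by Abel summation (you use the discrete form, the paper an equivalent integral form via a monotone extension of $1/\omega_k$), exploiting monotonicity so the total variation of $1/\omega_k$ telescopes, and then the doubling condition \eqref{eqn998} for the $s_n$ factor and the Wiener norm estimate. Your explicit case split (nondecreasing via \eqref{eqn999}, nonincreasing via \eqref{eqn997}) for why the right-hand side of \eqref{r2} tends to zero is just a spelled-out version of what the paper asserts, so there is nothing substantively different to report.
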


\begin{rem}
In the above result, the value of $s_n$ in the estimate \eqref{r2} cannot be improved: at least when $\omega$ is non-decreasing, the value of $(1-p_nf)(0)$ decays at a comparable speed  to that of the right-hand side, so the estimate is exact for any compact subset of $\overline{\mathbb{D}}\setminus Z(f)$ containing the point 0. For the case of decreasing weights, we wonder whether the rate presented is sharp, since the same estimate for the value of $(1-p_nf)(0)$ holds but now a gap appears between the two quantities. \end{rem}

\begin{proof}
Let $Z=\{z_1, z_2, \dots, z_d\}$ and let $K$ be a compact subset of \linebreak $\overline{\mathbb{D}}\setminus Z$.
Recall that $k_n(z,w) = \sum_{k=0}^n \frac{\overline{w}^k z^k}{\omega_k}$.  In view of Theorem \ref{main} and Corollary \ref{estinv}, to prove (\ref{r2}), it suffices to show that there is a positive constant $ C_1(Z,K)$ such that, for each $n$, 
$$ \sup _{z\in K} |k_{n}(z, z_i)| \leq \dfrac{ C_1(Z,K)}{s _{n}}  \ \ \ (i=1,2,\dots, d).$$ 
Since $\omega$ is monotone, the sequence \[\psi(t) := \frac{1}{\omega_{t+1} } - \frac{1}{\omega_t}, \quad t  \in \N,\] has constant sign. In particular, we have that 
\begin{equation}\label{final101}\sum_{t=0}^n |\psi(t)| = \left|\sum_{t=0}^n \psi(t)\right| = \left|\frac{1}{\omega_{n+1}} -1\right|.
\end{equation}
On the other hand, we have that $\psi$ can be extended to a continuous function $\phi'$ with a monotone primitive $\phi$ such that $\phi(k)=\frac{1}{\omega_k}$ for all $k\in \N$. Abel's summation formula then gives that 
\begin{equation}\label{final102}
\sum_{k=0}^n (\bar{z_i}z)^k \phi(k) = \left(\sum_{k=0}^n (\bar{z_i}z)^k\right) \cdot \frac{1}{\omega_n} - \int_0^n \left(\sum_{k=0}^{\left \lfloor{t}\right \rfloor}  (\bar{z_i}z)^k\right) \phi'(t)dt,
\end{equation} and this shows that for any $z \in K$ and $z_i \in \T$, we have
\begin{equation}\label{final103}
|k_n(z,z_i)|\leq \left(\sup_{t \in \N} \frac{|1-(\bar{z_i}z)^t|}{|1-(\bar{z_i}z)|} \right) \cdot \left( \frac{1}{\omega_n} + \frac{1}{\omega_{n+1}} +1 \right). 
\end{equation} Using \eqref{eqn998} we see that 
\[\left( \frac{1}{\omega_n} + \frac{1}{\omega_{n+1}} +1\right) \leq \frac{2+C}{s_n},\]
 while the first term on the right-hand side of \eqref{final103} is bounded by a constant that depends only on the choice of the compact set $K \subset \overline{\D}\backslash Z$.  This concludes the proof of \eqref{r2}.

Since the weight $\omega$ satisfies \eqref{eqn999} and \eqref{eqn998}, we conclude that the right-hand side of (\ref{r2}) tends to $0$ as $n\to\infty$. Hence $\sup _{z\in K}|1-p_n (z)f(z)|= \sup _{z \in K} |1-P_n(1)(z)|\to 0$ as $n\to\infty$.

To see the Wiener algebra norm estimate, notice that from Corollary \ref{main1}
\[\|1-p_nf\|_{A(\T)} = \sum_{k=0}^{n+d} \left| \frac{1}{\omega_k} \sum_{i=1}^{d}A_{i,n}\overline{z_i}^k\right|.\]

By Corollary \ref{estinv}, the right-hand side above is bounded above by
\[\sum_{k=0}^{n+d} \frac{1}{\omega_k} \frac{d \cdot C(Z,\omega)}{\sum_{k=0}^n \frac{1}{\omega_k}},\] where $C(Z,\omega)$ is a positive constant.
The doubling condition 
\eqref{eqn998} ensures now that the last quantity is bounded by a positive constant
$ C_2(Z,\omega)$.
\end{proof}

\section{Higher multiplicity}\label{Sec7}

In the previous sections we studied the case of functions which have only simple zeros. Now we focus on functions of the form $g_d(z)=(z-1)^d$ for any $d \in  \N$. Even though $g_d$ can be treated (for any fixed $d$) as the limiting case of a sequence of functions with simple zeros only, the separate study of this case will shed some light on how to eliminate the assumption of simple zeros in Theorems \ref{main}, \ref{Wiener} and \ref{pointwise}.  In previous work, the study of the approximants to 
$1/g_d$ had only been fruitful in the two simplest cases: $d=0,1$ or $H^2_\omega=H^2$ (see \cite{blSimanek}). We denote by $v^t$ the transpose of the vector $v$, and by $v_0$ a column vector of zeros. We will prove the following result:

\begin{thm}\label{thm7}
Let $d, n \in \N$, $g_d(z)=(z-1)^d$ and denote by $p_n$ the $n$-th optimal approximant to $1/g_d$ in $H^2_\omega$. Then there exists a vector of constants $A_n= (A_{1,n},...,A_{d,n})$ such that for all $k \in \N$ with $0 \leq k \leq n+d$, we have
\begin{equation}\label{multi1}
\widehat{(1-p_ng_d)}(k) = \frac{A_{1,n}+A_{2,n} k +...+A_{d,n}k^{d-1}}{\omega_k}=\frac{1}{\omega_k} (1,k,...,k^{d-1}) \cdot A_n^t.\end{equation} 
Moreover $A_n$ is the unique solution to the linear system
\begin{equation}\label{multi5}
E \cdot A_n^t = \begin{pmatrix} 1 \\ v_0 \end{pmatrix},\end{equation}
where $E_{i,j} = \displaystyle{\sum_{k=0}^{n+d} \frac{k^{i+j-2}}{\omega_k}}$ for $i, j= 1,...,d$ and $E=(E_{i,j})_{i,j=1,...,d}$.
In particular $E$ is invertible and
\begin{equation}\label{distform}
\|1-p_ng_d\|^2_\omega= {\dist}^2 (1, \mathcal{P}_n \cdot g_d) = (1 - p_n g_d)(0)= A_{1,n} =  E^{-1}_{1,1}.
\end{equation} 
\end{thm}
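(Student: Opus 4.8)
\textbf{Proof proposal for Theorem \ref{thm7}.}

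The plan is to mimic the structure of the proof of Theorem \ref{main}, replacing the reproducing kernels attached to \emph{distinct} zeros by the derivatives of the reproducing kernel at the single point $z=1$. The key observation is that $\mathcal{P}_n \cdot g_d$ is exactly the set of polynomials $q$ of degree at most $n+d$ such that $q$ vanishes to order $d$ at the point $1$, i.e.\ $q(1)=q'(1)=\cdots=q^{(d-1)}(1)=0$. Hence the orthogonal complement of $\mathcal{P}_n\cdot g_d$ inside $\mathcal{P}_{n+d}$ is the span of the functionals $q\mapsto q^{(j)}(1)$ for $j=0,\dots,d-1$. Since point evaluation of derivatives is represented in the finite-dimensional space $\mathcal{P}_{n+d}$ (with its $H^2_\omega$ inner product) by the functions $\partial_{\overline{w}}^{\,j}k_{n+d}(z,w)\big|_{w=1}$, these $d$ functions form a basis of the complement. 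I would first establish their linear independence (a confluent Vandermonde/Wronskian argument, or simply noting the functionals are linearly independent on $\mathcal{P}_{n+d}$ because $d\le n+d+1$), which yields invertibility of the associated Gram matrix and gives a representation
\begin{equation*}
(1-p_ng_d)(z) = \sum_{j=0}^{d-1} c_{j,n}\,\partial_{\overline{w}}^{\,j}k_{n+d}(z,w)\big|_{w=1}.
\end{equation*}

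Next I would compute the Taylor coefficients of each basis function explicitly. Since $k_{n+d}(z,w)=\sum_{k=0}^{n+d}\overline{w}^k z^k/\omega_k$, differentiating $j$ times in $\overline w$ and evaluating at $w=1$ produces $\sum_{k=0}^{n+d} (k)_j\, z^k/\omega_k$, where $(k)_j=k(k-1)\cdots(k-j+1)$. Therefore $\widehat{(1-p_ng_d)}(k)=\frac{1}{\omega_k}\sum_{j=0}^{d-1}c_{j,n}(k)_j$, and since the falling factorials $\{(k)_j\}_{j=0}^{d-1}$ and the monomials $\{k^j\}_{j=0}^{d-1}$ span the same space of polynomials in $k$ of degree $\le d-1$, a triangular change of basis recasts this as $\frac{1}{\omega_k}(A_{1,n}+A_{2,n}k+\cdots+A_{d,n}k^{d-1})$, which is exactly \eqref{multi1}.

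To pin down the linear system \eqref{multi5}, I would impose the defining conditions on $1-p_ng_d$, namely that applying each derivative functional $\partial_z^{\,i}$ at $z=1$ to $1-p_ng_d$ reproduces the corresponding derivative of the constant function $1$ minus that of $p_ng_d$; equivalently, I would use that the Gram matrix of the basis functions has entries $\langle \partial_{\overline w}^{\,i}k_{n+d}(\cdot,1),\partial_{\overline w}^{\,j}k_{n+d}(\cdot,1)\rangle_\omega=\sum_{k=0}^{n+d}(k)_i(k)_j/\omega_k$, and the right-hand vector records the derivatives of the constant $1$ at the point $1$, which are $1$ for order $0$ and $0$ for higher orders. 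After the same triangular change of basis that converts falling factorials to ordinary powers, the Gram entries become $E_{i,j}=\sum_{k=0}^{n+d}k^{i+j-2}/\omega_k$ and the right-hand side becomes $(1,\mathbf{0})^t$, giving \eqref{multi5}; invertibility of $E$ is inherited from invertibility of the Gram matrix under an invertible change of basis. Finally \eqref{distform} follows by the same computation as in Corollary \ref{main1}: the squared distance equals $\langle 1-p_ng_d,1\rangle_\omega=(1-p_ng_d)(0)$ (all derivative-correction terms vanish because $1-P_n(1)$ differs from $1$ only through the projection), which from \eqref{multi1} with $k=0$ is $A_{1,n}/\omega_0=A_{1,n}$, and solving $E\cdot A_n^t=(1,\mathbf 0)^t$ by Cramer's rule identifies $A_{1,n}=E^{-1}_{1,1}$.

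The main obstacle I anticipate is bookkeeping the triangular change of basis cleanly: one must verify that passing from the falling-factorial functionals $\partial_{\overline w}^{\,j}$ to ordinary powers $k^j$ is an invertible lower-triangular transformation applied \emph{consistently} to the basis functions, to the Gram matrix (on both sides, since it is a congruence), and to the right-hand vector, and that this congruence sends the confluent-kernel Gram matrix precisely onto the Hilbert-type matrix $E$ with entries $\sum_k k^{i+j-2}/\omega_k$ while preserving the right-hand side $(1,\mathbf 0)^t$. The linear-independence/invertibility step is routine once one recognizes $\mathcal{P}_n\cdot g_d$ as the kernel of $d$ independent derivative functionals.
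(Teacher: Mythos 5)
Your proposal is correct, but it reaches \eqref{multi1} and the invertibility of $E$ by a route that differs from the paper's in two of the three main steps. The paper obtains the ansatz \eqref{multi1} purely from the orthogonality relations $\langle 1-p_ng_d, z^kg_d\rangle_\omega=0$, $k=0,\dots,n$, read as a linear recurrence for the sequence $\omega_k\,\widehat{(1-p_ng_d)}(k)$ whose characteristic root $1$ has multiplicity $d$ (outsourcing this to a standard reference on recurrences), and it proves $\det(E)\neq 0$ by exhibiting $E$ as a Gram matrix in $H^2_{1/\omega}$ of vectors whose independence reduces to a Vandermonde determinant. You instead run the confluent analogue of the proof of Theorem \ref{main}: the orthogonal complement of $\mathcal{P}_n\cdot g_d$ in $\mathcal{P}_{n+d}$ is spanned by the derivative kernels $\partial_{\overline w}^{\,j}k_{n+d}(\cdot,w)\big|_{w=1}$, whose Taylor coefficients are falling factorials $(k)_j/\omega_k$, and a unipotent triangular change of basis converts these to the powers $k^{j}$ appearing in \eqref{multi1}; invertibility of $E$ is then inherited by congruence from the Gram matrix of these linearly independent kernels. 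The step you share with the paper is the derivation of the system \eqref{multi5} from the conditions $(1-p_ng_d)^{(s)}(1)=\delta_{s,0}$, $s=0,\dots,d-1$, together with the same triangular row reduction from falling factorials to powers (which fixes the right-hand side $(1,\mathbf 0)^t$ because $(k)_s$ has no constant term for $s\geq 1$); your final distance computation matches the paper's. Your approach is more self-contained and is exactly the strategy the paper itself advertises in its concluding remark (G) for handling multiple zeros, at the cost of the congruence bookkeeping you flag; the paper's version is shorter on the page and gets a concrete Vandermonde/Hankel description of $E$ that is then reused in the asymptotic analysis of Theorem \ref{thmfinal}.
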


\begin{proof}

The orthogonality conditions $(1-p_ng_d) \perp z^kg_d$ for 
$k = 0, 1, \ldots, n$ give rise to recurrence relations that the coefficients 
$\omega_k \cdot \widehat{(1-p_ng_d)}(k)$ must satisfy.  These recurrence relations are well-known (see 
\cite[Section 2.1]{GreKnu}), and lead directly to the condition \eqref{multi1} stated in Theorem \ref{thm7}.

Now, since $g_d$ has a zero of multiplicity $d$ at $1$,  the derivative of order $s$ of $ 1-p_ng_d$ must satisfy:

\begin{equation}\label{mzero}(1-p_ng_d)^{(s)}(1) = \begin{cases}
  1 \text{ if } s=0 \\      
  0 \text{ if } s=1,...,d-1.
\end{cases}\end{equation}

On the other hand, using  \eqref{multi1}, we must also have: 

\begin{equation}\label{multi2}
(1-p_ng_d)^{(s)}(1) = \sum_{k=s}^{n+d} \widehat{(1-p_ng_d)}(k) \frac{k!}{(k-s)!} = \sum_{k=s}^{n+d} \frac{(1,k,...,k^{d-1}) \cdot A_n k!}{\omega_k (k-s)!}.
\end{equation}

For $s=0$, condition \eqref{mzero} combined with \eqref{multi2} gives 
\begin{equation}\label{firstcoeff}
\left( \sum_{k=0}^{n+d} \frac{1}{\omega_k}, \sum_{k=0}^{n+d} \frac{k}{\omega_k}
,..., \sum_{k=0}^{n+d} \frac{k^{d-1}}{\omega_k}
 \right) \cdot A_n = 1,
\end{equation}
while for $s=1$ we obtain
\begin{equation*}
\left( \sum_{k=0}^{n+d} \frac{k}{\omega_k}, \sum_{k=0}^{n+d} \frac{k^2}{\omega_k}
,..., \sum_{k=0}^{n+d} \frac{k^{d}}{\omega_k}
 \right) \cdot A_n = 0.
\end{equation*}
Note that in this second equality, all the terms for $k=0$ in the sums are equal to 0, and thus we can start the summation at $k = 0.$
Continuing in this manner, using induction and noting that the quotient $\frac{k!}{(k-s)!}$ vanishes for $k=0,...,s-1$, we conclude that:
\begin{equation}\label{multi3}
\left( \sum_{k=0}^{n+d} \frac{k^{i-1}}{\omega_k}, \sum_{k=0}^{n+d} \frac{k^i}{\omega_k}
,..., \sum_{k=0}^{n+d} \frac{k^{i+d-2}}{\omega_k}
 \right) \cdot A_n = 0,
\end{equation}
for $i=2,...,d$.
Putting  \eqref{firstcoeff} and \eqref{multi3} together we conclude that \eqref{multi5} holds.

The remaining point is to check that $\det(E) \neq 0$. To see this, notice that $E$ is a Gram matrix for the inner product in $H^2_{1/\omega}$ (the space with weights given by the inverse of each $\omega_k$). In that space, $E_{i,j}=\left< f_i , f_j \right>$ where the coefficients of the functions $f_i$ in the orthonormal basis of monomials in $H^2_{1/\omega}$ are given by: 
\begin{equation}\label{multi6}
(\widehat{f_i}(k))_{k\in \N}= (1,2^{i-1},3^{i-1},...,(n+d)^{i-1},0,0,...).
\end{equation}

As discussed in Section 2, a basic result in linear algebra yields that $\det(E) \neq 0$ if and only if $\{f_i\}$ is a linear independent family. Since $n \geq 0$, such independence will be established if the matrix 
\begin{eqnarray*}
V=\begin{pmatrix}
1 & 1 & \cdots & 1    & 1 \\
1 & 2 & \cdots & d-1 & d \\
\vdots & \ddots & \vdots & \vdots & \vdots \\
1 & 2^{d-1} & \cdots & (d-1)^{d-1} & d^{d-1}
\end{pmatrix}
\end{eqnarray*}
has nonzero determinant, but $V$ is the transpose of the Vandermonde matrix for the points $\alpha_i = i$, for $i=1,...,d$ and hence its determinant \[\det(V)= \prod_{\stackrel{ i,j=1}{i < j}}^{d} (i-j) \neq 0.\]  Thus, $E$ is invertible, and the last identity in \eqref{distform} follows from \eqref{multi5}. The first two identities in \eqref{distform} are direct consequences of the fact that $p_ng_d$ is the orthogonal projection of the function 1, while $(1-p_ng_d)(0)=A_{1,n}$ is the case $k=0$ in \eqref{multi1}.
\end{proof}

\begin{rem} Notice that $E$ is a Hankel matrix, that is, a square matrix whose skew-diagonals are constant, and, as already mentioned, a Gramian. Hankel Gramians are interesting and have connections with the Hamburger moment problem, which asks when a sequence of real numbers corresponds to the moments of a positive Borel measure on the real line.   It may also be interesting to explore Levinson or Schur algorithms for inversion of Toeplitz and Hankel matrices in this context.  For the time being, we are going to use the information we just obtained in the context of Dirichlet-type spaces where $\omega_k=(k+1)^{\alpha}$.
\end{rem}

We first consider the case $\alpha <1$, and deal later with the classical Dirichlet space ($\alpha =1$). Note that the behavior in terms of cyclicity for $\alpha > 1$ is well understood, and we do not extend our treatment there. We would like to find good estimates of $A_{1,n}$ for large $n$ and will see that a certain Hilbert matrix will play a role.
Finally, notice that the rate at which $A_{1,n}$ decays towards 0 (as $n \rightarrow \infty$) is known from \cite{BCLSS}, up to a constant, but here we will determine the exact term that dominates this rate (including the constant). Denote by $B(t,s)=\int_0^1 u^{t-1} (1-u)^{s-1} du$ the classical beta function.

\begin{thm}\label{thmfinal}
Let $d \in \N$ be fixed, $g_d(z)=(z-1)^d$, $\omega_k = (k+1)^\alpha$, for all $k \in \N$, $A_{1,n}$ as in Theorem \ref{thm7}, and $\alpha < 1$. As $n \rightarrow \infty$ we have
\[A_{1,n} = \frac{ n^{\alpha -1} ( 1 + o(1))}{(B(d,1-\alpha))^2(1-\alpha)} .\]
\end{thm}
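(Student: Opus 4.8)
The plan is to reduce the statement to the asymptotics of two determinants and then to a single extremal problem in a weighted $L^2$ space. By Theorem \ref{thm7} we have $A_{1,n} = (E^{-1})_{1,1}$, so Cramer's rule gives $A_{1,n} = \det(\tilde E)/\det(E)$, where $\tilde E = (E_{i,j})_{i,j=2}^d$ is the $(1,1)$-minor of $E$ and $E_{i,j} = \sum_{k=0}^{n+d} k^{i+j-2}(k+1)^{-\alpha}$. First I would record the elementary asymptotic, obtained by comparing the sum with $\int_0^n x^{i+j-2-\alpha}\,dx$ (via Euler--Maclaurin or monotone integral comparison), that
\[
E_{i,j} = \frac{n^{i+j-1-\alpha}}{i+j-1-\alpha}\,(1+o(1)), \qquad n\to\infty ,
\]
uniformly over the finitely many indices $1\le i,j\le d$; here $\alpha<1$ is exactly what guarantees $i+j-1-\alpha>0$, so the integral dominates the sum.

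The next step extracts the matrix structure. Writing $D=\mathrm{diag}(n^{1/2},n^{3/2},\dots,n^{d-1/2})$, the displayed asymptotic says $D^{-1}(n^{\alpha}E)D^{-1}\to M$ entrywise, where $M=(M_{i,j})_{i,j=1}^d$ is the Hilbert-type (Cauchy) matrix $M_{i,j}=1/(i+j-1-\alpha)$. Since the determinant is continuous and $M$ is invertible, taking determinants and tracking the powers of $n$ (one has $\det(D)^2=n^{d^2}$, and the analogous scaling factor for $\tilde E$ is $n^{d^2-1}$) yields
\[
A_{1,n}=\frac{\det(\tilde E)}{\det(E)} = n^{\alpha-1}\,(M^{-1})_{1,1}\,(1+o(1)),
\]
since $(M^{-1})_{1,1}=\det(\tilde M)/\det(M)$ with $\tilde M$ the matrix $M$ with its first row and column removed. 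Thus the entire problem collapses to identifying the single constant $(M^{-1})_{1,1}$, which already recovers the order $n^{\alpha-1}$ predicted by \cite{BCLSS}.

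Third, I would compute $(M^{-1})_{1,1}$ through a Gram-matrix interpretation. Because $M_{i,j}=\int_0^1 t^{i-1}t^{j-1}t^{-\alpha}\,dt$, the matrix $M$ is the Gram matrix of the monomials $1,t,\dots,t^{d-1}$ in the real Hilbert space $L^2([0,1],t^{-\alpha}\,dt)$. By the standard Gram-determinant identity (as used in Section \ref{Sec2}), $(M^{-1})_{1,1}$ is the reciprocal of $\dist^2(1,\mathrm{span}\{t,\dots,t^{d-1}\})$, i.e. the squared distance from the constant function $1$ to the polynomials of degree $\le d-1$ vanishing at $0$. Writing the minimizer as $r^*(t)=1-tq^*(t)$ and using orthogonality, this squared distance equals $\int_0^1 r^*(t)\,t^{-\alpha}\,dt$, where $r^*$ is the degree-$(d-1)$ polynomial normalized by $r^*(0)=1$ that is orthogonal to $t,\dots,t^{d-1}$ for the weight $t^{-\alpha}$; equivalently, orthogonal to $1,\dots,t^{d-2}$ for the weight $t^{1-\alpha}$, so that $r^*$ is a shifted Jacobi polynomial.

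Finally I would evaluate the distance explicitly. The Rodrigues formula gives $r^*(t)=c\,t^{\alpha-1}\frac{d^{d-1}}{dt^{d-1}}[\,t^{d-\alpha}(1-t)^{d-1}\,]$; integrating $\int_0^1 r^*(t)t^{-\alpha}\,dt$ by parts $d-1$ times (all boundary terms vanish since $1-\alpha>0$ controls the endpoint $t=0$ and the factor $(1-t)^{d-1}$ kills the endpoint $t=1$) reduces the integral to $(d-1)!\int_0^1 t^{-\alpha}(1-t)^{d-1}\,dt=(d-1)!\,B(d,1-\alpha)$, while the normalization $r^*(0)=1$ forces $c=\Gamma(2-\alpha)/\Gamma(d+1-\alpha)$. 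Combining these with $B(d,1-\alpha)=(d-1)!\,\Gamma(1-\alpha)/\Gamma(d+1-\alpha)$ and $\Gamma(2-\alpha)=(1-\alpha)\Gamma(1-\alpha)$ yields the squared distance $(1-\alpha)\,B(d,1-\alpha)^2$, hence $(M^{-1})_{1,1}=1/[\,(1-\alpha)B(d,1-\alpha)^2\,]$ and the claimed formula. The main obstacle is precisely this last explicit evaluation: obtaining the exact constant (with its beta-function factors), rather than merely the decay rate $n^{\alpha-1}$, is where the Hilbert/Cauchy structure of $M$ must be fully exploited, and one must verify that every boundary term in the repeated integration by parts genuinely vanishes.
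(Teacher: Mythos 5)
Your argument is correct, and its first half coincides with the paper's proof: both start from $A_{1,n}=E^{-1}_{1,1}=\det(\tilde E)/\det(E)$, use the integral comparison $E_{i,j}=\frac{(n+d)^{i+j-1-\alpha}}{i+j-1-\alpha}(1+o(1))$, rescale rows and columns by powers of $n$, and invoke continuity of determinants of the fixed-size nonsingular limit to reduce everything to $(M^{-1})_{1,1}$ for the Cauchy matrix $M_{i,j}=1/(i+j-1-\alpha)$ (your bookkeeping of the exponents, $n^{d^2-d\alpha}$ versus $n^{d^2-1-(d-1)\alpha}$, matches the paper's $(n+d)^{d(d-\alpha)}$ computation). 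Where you genuinely diverge is in evaluating the constant, which is the real content of the theorem since the rate $n^{\alpha-1}$ was already known: the paper simply quotes the closed-form inverse of a Cauchy matrix with parameters $x_i=i-1-\alpha$, $y_j=j$ from Higham's book, whereas you recognize $M$ as the Gram matrix of $1,t,\dots,t^{d-1}$ in $L^2([0,1],t^{-\alpha}\,dt)$, identify $(M^{-1})_{1,1}$ as the reciprocal of $\dist^2\bigl(1,\mathrm{span}\{t,\dots,t^{d-1}\}\bigr)$, and compute that distance via the shifted Jacobi polynomial given by Rodrigues' formula and $d-1$ integrations by parts. Your route is self-contained and conceptually closer in spirit to the rest of the paper (it mirrors the Gram-matrix/distance formulas of Theorem \ref{main} and Corollary \ref{main1}, and makes transparent \emph{why} a squared beta function appears), at the cost of the careful verification that all boundary terms vanish — which you correctly reduce to $1-\alpha>0$ at $t=0$ and the order-$(d-1)$ zero of $(1-t)^{d-1}$ at $t=1$; the paper's route is shorter but leaves the constant as a black box. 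Both computations agree: $\dist^2=(1-\alpha)B(d,1-\alpha)^2$, hence $(M^{-1})_{1,1}=\bigl[(1-\alpha)B(d,1-\alpha)^2\bigr]^{-1}$.
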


\begin{proof}
From Theorem \ref{thm7} we know that $A_{1,n}=E^{-1}_{1,1}$. Let $p > -1$. A standard estimate gives that if $p \geq 0$
\begin{equation}\label{multi30}
1 + \int_1^{n+d} x^p dx \leq \sum_{k=1}^{n+d} k^p \leq 1 + \int_2^{n+d+1} x^p dx,\end{equation} while if $-1<p \leq 0$ the inequalities are reversed. In either case, we see that, as $n \rightarrow \infty$, 
\begin{equation}\label{multi8}
\sum_{k=1}^{n+d} k^p = \frac{(n+d)^{p+1}}{p+1} (1 + o(1)).
\end{equation}
By the linear properties of determinants related to multiplication of rows and columns by a scalar, we have that
\begin{equation}\label{multi10}
\det(E)= \prod_{i=1}^{d} (n+d)^{i-1-\alpha} \cdot \prod_{j=1}^d (n+d)^j \cdot \det(E^{2)}) = \det(E^{2)}) (n+d)^{d(d-\alpha)},
\end{equation}
where $E^{2)}$ is the matrix with entries given by \[E^{2)}_{i,j} = \frac{E_{i,j}}{(n+d)^{i+j-1-\alpha}}= \frac{1+o(1)}{i+j-1-\alpha},\] for $i,j=1,...,d$.

Since the inversion of matrices is continuous among non-singular complex matrices, the determinant of the matrix $E$ satisfies
\begin{equation}\label{multi9}
\det(E^{2)}) = \det(E^{3)}) (1+o(1)),
\end{equation}
as $n \rightarrow \infty$, where $E^{3)}$ is the matrix with entries $E^{3)}_{i,j} = \frac{1}{i+j-1-\alpha}$, for $i,j=1,...,d$. In fact, by taking $n$ large enough we can make any of the minors of $E^{3)}$ arbitrarily close to any of those of $E^{2)}$. Even though $E^{3)}$ is a very ill conditioned matrix, we are taking arbitrarily small perturbations and then minors converge to the corresponding values.

Notice that $E^{3)}$ is still a Hankel Gram matrix, so algorithms for its inversion are abundant. However, $E^{3)}$ has even more structure and is usually referred to as a \emph{generalized Hilbert matrix}, which is a moment matrix associated with certain orthogonal systems. In particular, it is a Cauchy matrix, that is, a matrix whose $i,j$-th entry is of the form $a_{ij} = \frac{1}{x_i + y_j}$ for given sequences $x_i$ and $y_j$, $x_i \neq - y_j$.  Exact formulas are known for its determinant and for its inverse matrix (see, e.g., \cite[pp. 512-515]{High}.) Applying the formula for the inverse for the Cauchy matrix given by $x_i=i-1-\alpha$, $y_j=j$, we obtain

\begin{equation}\label{multi20}
(E^{3)})_{1,1}^{-1}  = \frac{1}{(1-\alpha) \cdot (B(d,1-\alpha))^2 }.
\end{equation}

We now apply Cramer's rule to obtain $E^{-1}_{1,1}$, by computing the determinant  of the lower-right $(d-1)$-dimensional principal minor of $E$, say $\widetilde{E}$. In that way,
\begin{equation}\label{multi21}
E^{-1}_{1,1}= \frac{\det(\widetilde{E})}{\det(E)} = \frac{ 
\det\begin{pmatrix}
1  & v_0^t \\
v_0  & \widetilde{E}
\end{pmatrix}}{\det(E)},
\end{equation}
where $v_0$ again denotes a column vector of zeros.

Using the relationship between $E$ and the matrix $E^{3)}$, we obtain
\[
E^{-1}_{1,1}= \frac{ 
\det\begin{pmatrix}
(n+d)^{\alpha-1}  & v_0^t \\
v_0  & E^{4)}
\end{pmatrix}}{\det(E^{3)})} \cdot (1+o(1)) = \] \[ =  (n+d)^{\alpha-1} (1+o(1)) (E^{3)})^{-1}_{1,1},
\]
where $E^{4)}$ is the lower right $(d-1)$-principal minor of $E^{3)}$.

Putting the last expression together with the expression of $(E^{3)})^{-1}_{1,1}$ from \eqref{multi20} finishes the proof.
\end{proof}

For the Dirichlet space case ($\alpha = 1$), the same approach with slightly different growth estimates still works. If in \eqref{multi30} we allowed $p=-1$, we would obtain that, as $n \rightarrow \infty$,
\begin{equation}\label{multi40}
 \sum_{k=1}^{n+d} k^{-1} = \log (n+d+1) (1+o(1)).
\end{equation}
Taking $i=j=\alpha=1$, $\log(n+d+1)$ grows much faster than the constant $1 = (n+d)^{i+j-1-\alpha}$, and the determinant of $E$ is in this case adjusted by a factor of $\log(n+d+1)$.
The same method as in the proof of Theorem \ref{thmfinal} yields
\begin{equation}\label{multi51}
A_{1,n} = E^{-1}_{1,1} = \frac{1+o(1)}{\log (n+d+1)}.
\end{equation}
Finally, notice that for all $d \in \N$, $d \geq 1$,
\[\lim_{\alpha \rightarrow 1} (1-\alpha) B(d,1-\alpha) = 1.\]
Using \eqref{multi8} once more, we can conclude the following:

\begin{corollary}
For $\alpha \leq 1$, $d \in \N$, the limit
\[L_{\alpha, d} := \lim_{n \rightarrow \infty} A_{1,n} \cdot \left(\sum_{k=0}^{n+d} \frac{1}{\omega_k}\right)\]
exists, depends continuously on $\alpha$ and satisfies
\begin{equation*}
L_{\alpha, d} =   \begin{cases}
  \frac{1}{((1-\alpha)\cdot B(d,1-\alpha))^2} & \quad \text{ if } \alpha < 1, \\      
  1 & \quad \text{ if } \alpha = 1.
\end{cases}
\end{equation*}
\end{corollary}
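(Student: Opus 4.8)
The plan is to read off this corollary as a bookkeeping consequence of the two asymptotic regimes already established, namely Theorem~\ref{thmfinal} for $\alpha<1$ and the estimate~\eqref{multi51} for the Dirichlet case $\alpha=1$, paired with the asymptotics of the weighted partial sum. Since $\omega_k=(k+1)^\alpha$, I would first record that
\[
\sum_{k=0}^{n+d}\frac{1}{\omega_k}=\sum_{j=1}^{n+d+1}j^{-\alpha}.
\]
For $\alpha<1$ the exponent $p=-\alpha$ satisfies $p>-1$, so applying \eqref{multi8} (with upper index $n+d+1$ in place of $n+d$) gives
\[
\sum_{j=1}^{n+d+1}j^{-\alpha}=\frac{(n+d+1)^{1-\alpha}}{1-\alpha}\bigl(1+o(1)\bigr),
\]
while for $\alpha=1$ the estimate \eqref{multi40} gives $\sum_{j=1}^{n+d+1}j^{-1}=\log(n+d+1)(1+o(1))$.

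For the case $\alpha<1$, I would multiply the asymptotic $A_{1,n}=n^{\alpha-1}(1+o(1))/((B(d,1-\alpha))^2(1-\alpha))$ from Theorem~\ref{thmfinal} by the partial-sum asymptotic above. The only point to check is that the power factors combine to a genuine limit: since
\[
n^{\alpha-1}\,(n+d+1)^{1-\alpha}=\left(\frac{n+d+1}{n}\right)^{1-\alpha}\longrightarrow 1,
\]
the product tends to $1/((B(d,1-\alpha))^2(1-\alpha)^2)=1/((1-\alpha)B(d,1-\alpha))^2$, which is the claimed value of $L_{\alpha,d}$. For $\alpha=1$, multiplying $A_{1,n}=(1+o(1))/\log(n+d+1)$ from \eqref{multi51} by $\log(n+d+1)(1+o(1))$ cancels the logarithms and yields $L_{1,d}=1$, so the limit exists and takes the stated values in both regimes.

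It remains to verify continuity in $\alpha$. On $(-\infty,1)$ this is immediate: $B(d,s)$ is continuous and strictly positive for $s>0$, so $\alpha\mapsto(1-\alpha)B(d,1-\alpha)$ is continuous and nonvanishing, whence $L_{\alpha,d}$ is continuous there. The only genuinely non-automatic step is continuity across the endpoint $\alpha=1$, and this is precisely where the limit $\lim_{\alpha\to1}(1-\alpha)B(d,1-\alpha)=1$ recorded just before the statement is needed: it forces $L_{\alpha,d}\to 1=L_{1,d}$ as $\alpha\to1^-$, stitching the two formulas together. Thus I expect no serious obstacle in this corollary; the analytic content is already contained in Theorem~\ref{thmfinal}, \eqref{multi51}, and the beta-function limit, and the remaining work is to combine them carefully, the single delicate point being the verification that $((n+d+1)/n)^{1-\alpha}\to1$ rather than merely staying bounded.
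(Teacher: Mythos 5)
Your proposal is correct and follows essentially the same route as the paper, which likewise obtains the corollary by multiplying the asymptotics of $A_{1,n}$ from Theorem~\ref{thmfinal} (resp.\ \eqref{multi51}) by the partial-sum asymptotics \eqref{multi8} (resp.\ \eqref{multi40}) and invoking $\lim_{\alpha\to 1}(1-\alpha)B(d,1-\alpha)=1$ for continuity at the endpoint. Your explicit check that $\left(\frac{n+d+1}{n}\right)^{1-\alpha}\to 1$ is the only detail the paper leaves implicit, and it is handled correctly.
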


As remarked earlier, the results of this section give rise to the precise constant in the rate of decay of 
$\|1-p_ng_d\|_{\omega}^2$ for weights $\omega_k= (k+1)^{\alpha},$ $\alpha \leq 1.$  These estimates also provide a starting point for examining uniform convergence of $(1-p_ng_d)$ on compact subsets of the closed unit disc, which we leave for future work.   

\section{Concluding remarks}\label{Sec6}

We would like to conclude with some remarks and directions for future research.
\begin{itemize}
\item[(A)] The estimates obtained in Lemma \ref{growthAin} are close to
optimal: the estimates on $d_{k,n}$ yield that
the square of the norm of $1-p_n f$ is bounded by a constant times
$(\sum_{t=0}^{n+d} \frac{1}{\omega_t})^{-1}$, which we know from previous
work to be also the exact rate for any polynomial function with at
least one zero on the boundary.
\item[(B)] There are analogues to Theorems \ref{Wiener} and \ref{pointwise}, at least, for the
case when all the zeros are inside the disc $\D$, but one should
rather consider $h-p_nf$, instead of the function $1-p_nf$, where
$f$ has a factorization as $f=gh$ and $h$ is the orthogonal
projection of $1$ onto $[f]$ ($h$ is a constant multiple of an inner
function). This relation between factorization and the orthogonal projection of 1 onto invariant subspaces of $H^2_\omega$ is explained in \cite{Remarks}. 
\item[(C)] It seems natural to expect that Wiener algebra functions or 
functions in other $H^2_{\omega}$ spaces 
will have a behavior similar to the one described here, perhaps
requiring that boundary zeros are not multiple.  In order for the proofs here to 
go through, one needs good estimates, for $|z| \geq 1,$ on sums of the form 
$\sum_{k=0}^N \frac{1}{\omega_k} z^k$ that do not depend only on the modulus of $z.$
\item[(D)] The approach discussed in this paper yields, in the forthcoming paper \cite{SecoTellez}, the possibility of proving results on cyclicity and the corresponding rates of approximation on large classes of (non-Hilbert) Banach spaces.
\item[(E)] Theorem \ref{main} has not been established for functions $g$ other than very particular polynomials of low degree ($\deg(g) \leq \deg(f)$). However, it is of general interest to understand approximation properties for any function in the invariant subspace generated by a function $f$, even if we have to restrict to the case when $f$ is itself a polynomial (as in the present article).
\item[(F)] From the Taylor coefficients $d_{k,n}$ of $1-p_nf$ obtained in Corollary \ref{main1} we can also obtain the coefficients of $p_n$ themselves: Notice that $p_n = 1/f-(1-p_nf)/f$. Denote by $b_k$ the Taylor coefficient of order $k$ of the function $1/f$, and by $c_{k,n}$ those of $p_n$. Then for all $0 \leq k \leq n \in \N$ we have \[c_{k,n} = b_k + \sum_{r=0}^k b_{k-r}d_{r,n}.\]
\item[(G)] We think that our ideas give all the necessary tools to solve completely the problem of finding $1-p_nf$ for any polynomial $f$ in any space $H^2_\omega$ but the task of writing down the formulas for the case when $f$ has different zeros with different multiplicities seems involved and we decided not to pursue that here. However, the conditions that one will need to impose to determine the coefficients are that the corresponding derivatives (up to multiplicity of the zeros minus 1) cancel at the selected points. The key fact for our proof in the simple zero case was the fact that the orthogonal complement of $\mathcal{P}_nf$ in $\mathcal{P}_{n+d}$ is spanned by the kernels at the zeros of $f$. With higher multiplicities, one will need to make use of the derivatives of the kernels also, but much of the mathematical work will be simplified because derivatives of kernels happen to be the kernels in other space closely related to $H^2_\omega$. Also, the assumption that $f(0) \neq 0$ can be avoided by changing the weight $\omega$ to another weight which is essentially $\omega$ shifted as many times as the multiplicity of the zero at the origin.
\end{itemize}

\noindent\textbf{Acknowledgements.} Myrto Manolaki thanks the Department of Mathematics and Statistics 
at the University of South Florida for support during work on this project. Daniel Seco acknowledges
financial support from the Spanish Ministry of Economy and
Competitiveness, through the ``Severo Ochoa Programme for Centers of
Excellence in R\&D'' (SEV-2015-0554) and through grant
MTM2016-77710-P. The authors are grateful to the referees for their careful reading of the article and their useful comments.

\end{document}